\newtheorem{thm}{Theorem}[section]
\newtheorem{lemma}[thm]{Lemma}
\newtheorem{prop}[thm]{Proposition}
\newtheorem{conj}[thm]{Conjecture}
\newtheorem{cor}[thm]{Corollary}
\theoremstyle{definition}
\newtheorem{defin}[thm]{Definition}
\numberwithin{equation}{section}
\author{\uppercase{\scriptsize{Riccardo W. Maffucci}}}
\title{\normalsize{\uppercase{\bf{Restriction of 3D arithmetic Laplace eigenfunctions to a plane}}}}
\date{}
\newcommand{\Addresses}{{
  \bigskip
  \footnotesize

  R.W.~Maffucci, \textsc{Mathematical Institute, University of Oxford, Woodstock Road Oxford OX2 6GG, UK}\par\nopagebreak
  \texttt{riccardo.maffucci@maths.ox.ac.uk}
}}
\begin{document}
\titleformat{\section}
  {\Large\scshape\centering\bf}{\thesection}{1em}{}
\titleformat{\subsection}
  {\large\scshape\bf}{\thesubsection}{1em}{}
\maketitle
\begin{abstract}
We consider a random Gaussian ensemble of Laplace eigenfunctions on the 3D torus, and investigate the 1-dimensional Hausdorff measure (`length') of nodal intersections against a smooth 2-dimensional toral sub-manifold (`surface'). The expected length is universally proportional to the area of the reference surface, times the wavenumber, independent of the geometry.

For surfaces contained in a plane, we give an upper bound for the nodal intersection length variance, depending on the arithmetic properties of the plane. The bound is established via estimates on the number of lattice points in specific regions of the sphere.
\end{abstract}
{\bf Keywords:} nodal intersections, arithmetic random waves, lattice points on spheres, Gaussian random fields, Kac-Rice formulas.
\\
{\bf MSC(2010):} 11P21, 60G15.


\section{Introduction}
\subsection{Nodal sets for eigenfunctions of the Helmholtz equation}
Let $\Delta_\mathcal{M}$ be the Laplace-Beltrami operator, or for short Laplacian, on a smooth manifold $\mathcal{M}$ of dimension $d$. With motivation coming from physics and PDEs, one is interested in eigenfunctions $G$ of the Helmholtz equation
\begin{equation*}
(\Delta_\mathcal{M}+E)G=0
\end{equation*}
with eigenvalue (or `energy' in the physics terminology) $E>0$, in the high energy limit $E\to\infty$.

Of particular importance is the {\bf nodal set} (zero-locus) of $G$, 
\begin{equation}
\label{nodset}
\mathcal{A}_G:=\{x\in\mathcal{M} : G(x)=0\}.
\end{equation}
Its study dates back to Hooke's and Chladni's pioneering work (17th-18th century). There is a wide range of scientific applications including telecommunications \cite{rice44}, oceanography \cite{longue, azawsc}, and photography \cite{swerli}.

It is known that $\mathcal{A}_G$ is a smooth sub-manifold of dimension $d-1$ except for a set of lower dimension \cite[Theorem 2.2]{cheng1}. For $d=2$, we call $\mathcal{A}_G$ {\bf nodal line}, and for $d=3$, we call it {\bf nodal surface}.

Our setting is the three-dimensional standard flat torus $\mathcal{M}=\mathbb{T}^3=\mathbb{R}^3/\mathbb{Z}^3$. Here the Laplace eigenvalues {`energy levels'}, are of the form $4\pi^2 m$, $m\in S_3$, where
\begin{equation*}
S_3:=\{0<m: \ m=a_1^2+a_2^2+a_3^2,\  a_i\in\mathbb{Z}\}.
\end{equation*}
The frequencies
\begin{equation}
\label{Lambda}
\Lambda_m=\{\lambda\in\mathbb{Z}^3 : \|\lambda\|^2=m\}
\end{equation}
are the lattice points on $\sqrt{m}\mathcal{S}^2$, the sphere of radius $\sqrt{m}$. The (complex-valued) Laplace eigenfunctions may be written as \cite{brnoda}
\begin{equation}
\label{lapeig}
G(x)=G_m(x)=\sum_{\lambda\in\Lambda}
c_{\lambda}
e^{2\pi i\langle\lambda,x\rangle},
\qquad x\in\mathbb{T}^3,
\end{equation}
with $c_\lambda$ Fourier coefficients. 

The eigenspace dimension is the lattice point number, i.e., the number of ways to express $m$ as a sum of three integer squares
\begin{equation}
\label{N}
N:=|\Lambda|=r_3(m).
\end{equation}
In what follows we will always make the (natural) assumption $m\not\equiv 0,4,7 \pmod 8$, implying
\begin{equation}
\label{totnumlp3}
(\sqrt{m})^{1-\epsilon}
\ll
N
\ll
(\sqrt{m})^{1+\epsilon}
\end{equation}
for all $\epsilon>0$ \cite[\S 1]{bosaru} and in particular $N\to\infty$. This assumption is natural in the sense that if $m\equiv 7 \pmod 8$ then $m\not\in S_3$, while multiplying $m$ by $4$ just rescales the frequency set \cite[\S 1.3]{ruwiye}. Further details on the structure of $\Lambda_m$ may be found in section \ref{seclp}.

\subsection{Nodal intersections}
One insightful approach to the study of the nodal set is given by its restriction to a fixed sub-manifold in the ambient $\mathcal{M}$, the so-called {\bf nodal intersections}. The recent papers \cite{totzel,cantot,elhtot} analyse nodal intersections on `generic' surfaces (i.e. $d=2$) against a curve. Unless the curve is contained in the nodal line, the intersection is a set of points. It is expected that in many situations, the nodal intersections number obeys the bound $\ll\sqrt{E}$, where $E>0$ is the eigenvalue.

The nodal set of $G_m$ \eqref{lapeig} is a nodal surface on $\mathbb{T}^3$. We consider the restriction of $G_m$ to a fixed smooth $2$-dimensional sub-manifold $\Pi\subset\mathbb{T}^3$, and specifically the {\bf nodal intersection length}
\begin{equation*}
h_1(\mathcal{A}_G \cap \Pi)
\end{equation*}
where $h_1$ is $1$-dimensional Hausdorff measure, in the high energy limit $m\to\infty$. Bourgain and Rudnick found that, for $\Pi$ real-analytic, with nowhere zero Gauss-Kronecker curvature, there exists $m_\Pi$ such that for every $m\geq m_\Pi$, the surface $\Pi$ is not contained in the nodal set of any eigenfunction $G_m$ \cite[Theorem 1.2]{brnoda}. Moreover, one has the upper bound
\begin{equation}
\label{BRthm}
h_1(\mathcal{A}_G \cap \Pi)<C_\Pi\cdot\sqrt{m}
\end{equation}
for some constant $C_\Pi$ \cite[Theorem 1.1]{brgafa}, and for every eigenfunction $G_m$ the nodal intersection is non-empty \cite[Theorem 1.3]{brgafa}.

\subsection{The arithmetic waves}
\label{secarw}
The eigenvalue multiplicities allow us to randomise our setting as follows. We will be working with an ensemble of {\em random} Gaussian Laplace toral eigenfunctions (`arithmetic waves' for short \cite{orruwi, rudwi2, krkuwi})
\begin{equation}
\label{arw}
F(x)=F_m(x)=\frac{1}{\sqrt{N}}
\sum_{\lambda\in\Lambda}
a_{\lambda}
e^{2\pi i\langle\lambda,x\rangle},
\qquad x\in\mathbb{T}^3,
\end{equation}
of eigenvalue $4\pi^2m$, where $a_{\lambda}$ are complex standard Gaussian random variables \footnote{Defined on some probability space $(\Omega,\mathcal{F},\mathbb{P})$, where $\mathbb{E}$ denotes
the expectation with respect to $\mathbb{P}$.} (i.e., one has $\mathbb{E}[a_{\lambda}]=0$ and $\mathbb{E}[|a_{\lambda}|^2]=1$), independent save for the relations $a_{-\lambda}=\overline{a_{\lambda}}$ (so that $F(x)$ is real valued). The total area of the nodal surface of $F$ was studied in \cite{benmaf,cammar}. The arithmetic wave \eqref{arw} may be analogously defined on the $d$-dimensional torus $\mathbb{R}^d/\mathbb{Z}^d$. Several recent papers investigate the nodal volume \cite{rudwi2, krkuwi} and nodal intersections of arithmetic waves against a fixed {\em curve} \cite{rudwig, maff2d, roswig, ruwiye, maff3d}.

\subsection{Restriction to a surface of nowhere vanishing Gauss-Kronecker curvature}
\label{prior}
In \cite{maff18} we considered the {\bf nodal intersection length}, i.e. the random variable
\begin{equation}
\label{L}
\mathcal{L}=\mathcal{L}_m:=h_1(\mathcal{A}_{F_m}\cap\Pi)
\end{equation}
where $\Pi$ is a smooth $2$-dimensional sub-manifold of $\mathbb{T}^3$, possibly with boundary, admitting a smooth normal vector locally.
The expected intersection length is $\mathbb{E}[\mathcal{L}]=\sqrt{m}A\pi/\sqrt{3}$, where $A$ is the total area of $\Pi$ \cite[Proposition 1.2]{maff18}. This expectation is independent of the geometry, and is consistent with \eqref{BRthm}.

The main result of \cite{maff18} is the precise asymptotic of the nodal intersection length variance, against surfaces of nowhere vanishing Gauss-Kronecker curvature \cite[Theorem 1.3]{maff18}
\begin{equation}
\label{var}
\text{Var}(\mathcal{L})=\frac{\pi^2}{60}\frac{m}{N}\left[3\mathcal{I}-A^2+O\left(m^{-1/28+o(1)}\right)\right]
\end{equation}
where
\begin{equation*}
\mathcal{I}=\mathcal{I}_\Pi:=\iint_{\Pi^2}\langle\overrightarrow{n}(p),\overrightarrow{n}(p')\rangle^2dpdp'
\end{equation*}
and $\overrightarrow{n}(p)$ is the unit normal vector to $\Pi$ at the point $p$. 

In this paper, we consider the {\bf other extreme} of the nowhere vanishing curvature scenario, namely, the case where $\Pi$ is {\bf contained in a plane}. The above result for the expected intersection length is valid in this case also. The integral $\mathcal{I}$ satisfies the sharp bounds \cite[Proposition 1.4]{maff18}
\begin{equation*}
\frac{A^2}{3}\leq\mathcal{I}\leq A^2,
\end{equation*}
so that the leading coefficient of \eqref{var} is always non-negative and bounded, though it may vanish, for instance when $\Pi$ is a sphere or a hemisphere \footnote{There are also (several) other examples of these so-called `static' surfaces. To establish the variance asymptotic for these seems to be a difficult problem.}: in this case the variance is of lower order than $m/N$. This behaviour is similar to the two-dimensional case \cite{rudwig, roswig}.

The theoretical maximum of the variance asymptotic is achieved in the case of intersection with a surface contained in a plane. Although this case is excluded by the assumptions of \eqref{var}, it is natural to conjecture $\text{Var}(\mathcal{L})\sim A^2m/N\cdot\pi^2/30$ for $\Pi$ confined to a plane.

\subsection{Main results}
\label{secresults}
Let $\Pi$ be a smooth $2$-dimensional sub-manifold of $\mathbb{T}^3$ contained in a plane. We denote $\overrightarrow{n}$ the unit normal vector to this plane. We distinguish between vectors/planes of the following three types, possibly after relabelling the coordinates and assuming w.l.o.g. that $n_1\neq 0$:
\begin{align}
\label{qq}
\tag{i}
{n_2}/{n_1}\in\mathbb{Q} \quad&\text{and}\quad {n_3}/{n_1}\in\mathbb{Q};
\\
\label{qr}
\tag{ii}
{n_2}/{n_1}\in\mathbb{Q} \quad&\text{and}\quad {n_3}/{n_1}\in\mathbb{R}\setminus\mathbb{Q};
\\
\label{rr}
\tag{iii}
{n_2}/{n_1}\in\mathbb{R}\setminus\mathbb{Q} \quad&\text{and}\quad {n_3}/{n_1}\in\mathbb{R}\setminus\mathbb{Q}.
\end{align}
Vectors/planes of type \eqref{qq} will also be called `rational', and the remaining types `irrational'. This terminology is borrowed from \cite{maff3d}.

As in {\cite[\S 2.3]{brgafa}} we will denote $\kappa(R)$ the maximal number of lattice points in the intersection of $R\mathcal{S}^{2}$ and any plane. The upper bound
\begin{equation}
\label{kappa3bound}
\kappa(R)\ll R^\epsilon, \quad \forall\epsilon>0
\end{equation}
is due to Jarnik \cite{jarnik}, \cite[(2.6)]{brgafa}.

\begin{thm}
\label{thmpl}
Let $\Pi$ be a smooth $2$-dimensional sub-manifold of $\mathbb{T}^3$ contained in a plane.
\begin{enumerate}[label=(\arabic*)]
\item
\label{thmpl1}
If the plane is rational, then the nodal intersection length variance satisfies the bound
\begin{equation}
\label{varplr}
\text{Var}(\mathcal{L})\ll_{\Pi}\frac{m}{N}\cdot\kappa(\sqrt{m}).
\end{equation}
\item
\label{thmpl2}
Moreover, for irrational planes we have
\begin{equation}
\label{varpli}
\text{Var}(\mathcal{L})\ll_{\Pi}\frac{m}{N}\cdot N^{a+\epsilon}
\end{equation}
for any positive $\epsilon$ where we may take:
\begin{enumerate}[label=(\Alph*)]
\item
$a=3/7$ for planes of type \eqref{qr};
\item
$a=3/4$ for planes of type \eqref{rr}.
\end{enumerate}
\end{enumerate}
\end{thm}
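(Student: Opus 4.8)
The plan is to run the Kac--Rice scheme of \cite{maff18}, but to replace its curvature-dependent remainder estimates---which are unavailable once $\Pi$ is flat---by direct counting of lattice points on $\sqrt{m}\,\mathcal{S}^2$. First I would restrict $F_m$ to the plane and record that the resulting field is stationary, with covariance $r(t)=\frac1N\sum_{\lambda\in\Lambda}e^{2\pi i\langle\mu(\lambda),t\rangle}$, where $\mu(\lambda)$ is the orthogonal projection of $\lambda$ onto the plane (the in-plane frequency); the normal phases $\langle\lambda,x_0\rangle$ cancel against the relations $a_{-\lambda}=\overline{a_\lambda}$. Writing $\text{Var}(\mathcal L)=\int_{\Pi^2}\bigl(K_2(p,q)-K_1^2\bigr)\,dp\,dq$ via Kac--Rice, I would use the standard bound on the two-point function away from the diagonal, controlling $K_2-K_1^2$ by a quadratic form in the normalised off-diagonal covariances $r$, $\nabla r/\sqrt m$ and $\nabla^2 r/m$ (the near-diagonal band $|p-q|\ll m^{-1/2}$ being of lower order). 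Expanding the squares and integrating in $p,q$ collapses each term to a Fourier coefficient of the region $\Pi$, giving
\[
\text{Var}(\mathcal L)\ \ll_\Pi\ \frac mN\cdot\frac1N\sum_{\lambda,\lambda'\in\Lambda} w(\lambda,\lambda')\,\bigl|\widehat{\chi}(\mu(\lambda)-\mu(\lambda'))\bigr|^2,\qquad |w(\lambda,\lambda')|\ll 1,
\]
where $\widehat{\chi}(v)=\int_\Pi e^{2\pi i\langle v,p\rangle}\,dp$ and $\widehat\chi(0)=A$.

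Since $\Pi$ is smooth one has $|\widehat\chi(v)|\ll_\Pi(1+|v|)^{-1}$ (one integration by parts on $\partial\Pi$); together with $|w|\ll1$ and a dyadic decomposition in $|\mu(\lambda)-\mu(\lambda')|$, the problem reduces to estimating the counting function $\mathcal N(\rho):=\#\{(\lambda,\lambda')\in\Lambda^2:|\mu(\lambda)-\mu(\lambda')|\le\rho\}$ for $1\le\rho\le\sqrt m$. Here the arithmetic of the plane enters: $\mu(\lambda)-\mu(\lambda')=\mu(\lambda-\lambda')$ vanishes exactly when $\lambda-\lambda'\parallel\overrightarrow n$, and the constraint $|\mu(\lambda-\lambda')|\le\rho$ confines the difference to a thin cylinder about the normal direction, i.e. confines $\lambda$ to the $\mu$-preimage of a disc about $\mu(\lambda')$. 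This preimage is a pair of spherical caps whose size is bounded away from the equator $\langle\lambda,\overrightarrow n\rangle=0$ but degenerates into a band around a great circle as $\lambda'$ approaches it; thus $\mathcal N(\rho)=\sum_{\lambda'}\#\bigl(\Lambda\cap T_{\lambda'}(\rho)\bigr)$ is exactly a count of lattice points in the ``specific regions of the sphere'' of the abstract.

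For a rational plane the projected lattice $\mu(\mathbb Z^3)$ is discrete with minimal distance $\gg_\Pi 1$, each fibre $\mu^{-1}(c)$ meets the sphere in at most two points, and each tube $T_{\lambda'}(1)$ lies within bounded distance of a plane section; bounding $\#(\Lambda\cap T_{\lambda'})\ll\kappa(\sqrt m)$ uniformly via \eqref{kappa3bound} and summing over $\lambda'$ yields $\mathcal N(1)\ll N\,\kappa(\sqrt m)$ and hence \eqref{varplr}. For irrational planes $\mu(\mathbb Z^3)$ is dense, there is no minimal distance, and the caps near the equator must be counted honestly: I would bound the number of lattice points in such bands using the Jarnik bound \eqref{kappa3bound} on plane sections together with the Diophantine type of $\overrightarrow n$. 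A single rational ratio (type \eqref{qr}) pins $\lambda-\lambda'$ to a one-dimensional sublattice transverse to $\overrightarrow n$ and sharpens the count to the exponent $a=3/7$, whereas the fully irrational case \eqref{rr} affords no such saving and gives $a=3/4$.

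The main obstacle is precisely this last count: near $\langle\lambda,\overrightarrow n\rangle=0$ the projection $\mu$ folds, the relevant region swells into a band about a great circle of width $\asymp m^{1/4}$, and one must control lattice points near circles on the sphere uniformly in the position of $\lambda'$ and across all dyadic scales $\rho$. Extracting the exponents $3/7$ and $3/4$ amounts to optimising these band counts against the widths using the Diophantine properties of $\overrightarrow n$; by contrast the rational case is comparatively clean, because discreteness of $\mu(\mathbb Z^3)$ caps every tube by $\kappa(\sqrt m)$ at once.
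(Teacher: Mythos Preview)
Your overall architecture---Kac--Rice $\Rightarrow$ second moments of $r$ and its derivatives $\Rightarrow$ lattice point counting on $\sqrt m\,\mathcal S^2$---matches the paper. But two of the load-bearing steps differ from the paper in ways that leave genuine gaps.

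\medskip
\textbf{Fourier decay and the $(\xi,\eta)$ factorisation.} You bound $|\widehat\chi(v)|\ll(1+|v|)^{-1}$ and then dyadically decompose in $|\mu(\lambda)-\mu(\lambda')|$. That decay is \emph{not} summable: in the rational case, where $\mu(\mathbb Z^3)$ is a 2D lattice with $\asymp\rho$ points in each annulus $|\ell|\in[\rho,\rho+1]$, the resulting sum $\sum_{0<|\ell|\le R}|\ell|^{-2}$ diverges like $\log m$. So ``$\mathcal N(1)\ll N\kappa$ hence \eqref{varplr}'' skips the scales $\rho>1$ with nothing to control them. The paper avoids this by taking $\Pi$ (w.l.o.g.) inside a rectangle aligned with an orthonormal frame $\{\xi,\eta\}$ in the plane; then the oscillatory integral \emph{factors} and one gets the product bound
\[
\Bigl|\int_0^A\!\!\int_0^B e^{2\pi i\langle\lambda-\lambda',\,u\xi+v\eta\rangle}\,du\,dv\Bigr|^2\ \ll\ \min\!\Bigl(1,\ \frac{1}{\langle\lambda-\lambda',\xi\rangle^{2}\langle\lambda-\lambda',\eta\rangle^{2}}\Bigr),
\]
which is strictly stronger than $|v|^{-2}$ and is what makes the off-diagonal sums converge (in the rational case the inner products are integers up to a constant, and $\sum_{a,b\neq 0}(ab)^{-2}<\infty$). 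Your single isotropic scale $|\mu(\lambda-\lambda')|$ discards exactly this product structure.

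\medskip
\textbf{Wrong spherical regions for the irrational exponents.} You localise $\lambda$ to tubes in the direction $\overrightarrow n$, i.e.\ to caps/bands on the sphere determined by $|\mu(\lambda-\lambda')|\le\rho$. The paper instead localises by the two \emph{coordinates} separately: the constraints $|\langle\lambda-\lambda',\xi\rangle|\le c$ and $|\langle\lambda-\lambda',\eta\rangle|\le\rho|\lambda-\lambda'|$ each cut out a spherical \emph{segment} (a slab between parallel planes) in the direction $\xi$ or $\eta$. This is where the arithmetic asymmetry bites: for a type~\eqref{qr} plane one can choose $\xi$ rational and $\eta$ of type~\eqref{qr}, so the segment in direction $\xi$ obeys the sharp bound \eqref{psi1} while the segment in direction $\eta$ uses \eqref{psi3}; optimising $(c,\rho)$ (and invoking the Riesz $s$-energy estimate of Bourgain--Rudnick--Sarnak in the ``large'' regime) is what produces $a=3/7$. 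Similarly, for type~\eqref{rr} one has $\xi$ of type~\eqref{qr} and $\eta$ of type~\eqref{rr}, and the relevant segment bounds are \eqref{psi2} and \eqref{psi3}, giving $a=3/4$. Your tubes in direction $\overrightarrow n$ do not see the separate types of $\xi$ and $\eta$, so there is no mechanism in the proposal that singles out $3/7$ versus $3/4$. (Incidentally, the claim that ``each tube $T_{\lambda'}(1)$ lies within bounded distance of a plane section'' is not correct: the cylinder--sphere intersection is a pair of caps, not a near-circle.)

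\medskip
In short, the crucial missing idea is the separation of variables along an in-plane orthonormal frame $(\xi,\eta)$: it upgrades the Fourier decay to a product, and converts the lattice-point problem into counting in spherical \emph{segments} whose bounds are sensitive to the arithmetic type of each direction separately. Once you have that, the rational case is a two-line computation, and the irrational exponents fall out of a three-regime optimisation using the segment bounds \eqref{psi1}--\eqref{psi4} together with the Riesz energy asymptotic.
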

Theorem \ref{thmpl} will be proven in section \ref{secpl}. Taking into account \eqref{kappa3bound}, the bound \eqref{varplr} is just $\epsilon$'s off from the conjectured order $m/N$. Similarly to \cite{rudwig, ruwiye, maff18}, the above results on expectation and variance have the following consequence.
\begin{thm}
Let $\Pi$ be a smooth $2$-dimensional sub-manifold of $\mathbb{T}^3$ contained in a plane, of total area $A$. Then the nodal intersection length $\mathcal{L}$ satisfies, for all $\epsilon>0$,
\begin{equation*}
\lim_{\substack{m\to\infty \\ m\not\equiv 0,4,7 \pmod 8}}\mathbb{P}\left(\left|\frac{\mathcal{L}}{\sqrt{m}}-\frac{\pi}{\sqrt{3}}A\right|>\epsilon\right)=0.
\end{equation*}
\end{thm}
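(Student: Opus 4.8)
The plan is to deduce the convergence in probability from first- and second-moment information by a single application of Chebyshev's inequality, so that the entire content of the statement reduces to showing that the variance, normalised by $m$, tends to zero. Recalling the expectation $\mathbb{E}[\mathcal{L}]=\sqrt{m}A\pi/\sqrt{3}$ from \cite[Proposition 1.2]{maff18}, the constant $\frac{\pi}{\sqrt{3}}A$ appearing in the statement is exactly $\mathbb{E}[\mathcal{L}]/\sqrt{m}=\mathbb{E}[\mathcal{L}/\sqrt{m}]$. Hence for every fixed $\epsilon>0$,
\begin{equation*}
\mathbb{P}\left(\left|\frac{\mathcal{L}}{\sqrt{m}}-\frac{\pi}{\sqrt{3}}A\right|>\epsilon\right)=\mathbb{P}\left(\left|\frac{\mathcal{L}}{\sqrt{m}}-\mathbb{E}\left[\frac{\mathcal{L}}{\sqrt{m}}\right]\right|>\epsilon\right)\leq\frac{\text{Var}(\mathcal{L})}{m\epsilon^2},
\end{equation*}
the last step being Chebyshev. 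It therefore suffices to prove $\text{Var}(\mathcal{L})/m\to 0$ as $m\to\infty$ along $m\not\equiv 0,4,7\pmod 8$.

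Next I would insert the variance bounds of Theorem \ref{thmpl} and exploit the lower bound $N\gg m^{(1-\delta)/2}$ valid for every $\delta>0$, which is \eqref{totnumlp3}. In the rational case \ref{thmpl1}, combining \eqref{varplr} with the Jarnik bound \eqref{kappa3bound}, namely $\kappa(\sqrt{m})\ll m^{\delta/2}$, gives
\begin{equation*}
\frac{\text{Var}(\mathcal{L})}{m}\ll_\Pi\frac{\kappa(\sqrt{m})}{N}\ll_\Pi\frac{m^{\delta/2}}{m^{(1-\delta)/2}}=m^{\delta-1/2},
\end{equation*}
which tends to zero once $\delta<1/2$ is chosen. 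In the irrational case \ref{thmpl2}, \eqref{varpli} yields
\begin{equation*}
\frac{\text{Var}(\mathcal{L})}{m}\ll_\Pi\frac{N^{a+\delta}}{N}=N^{-(1-a-\delta)},
\end{equation*}
and since $a\in\{3/7,3/4\}$ is strictly less than $1$, choosing $\delta<1-a$ makes the exponent negative; as $N\to\infty$ this again tends to zero. In either case $\text{Var}(\mathcal{L})/m\to 0$, which completes the argument.

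The genuine mathematical work has already been carried out in the expectation computation of \cite{maff18} and in Theorem \ref{thmpl}; the present deduction is a routine second-moment argument, identical in structure to those in \cite{rudwig, ruwiye, maff18}. The only point requiring minimal care is the bookkeeping of the arbitrarily small exponent losses: in the rational case one must ensure that the $m^{\delta}$-type loss from $\kappa(\sqrt{m})$ is strictly dominated by the power-of-$m$ growth of $N$, which is precisely why it is essential that the exponent in \eqref{totnumlp3} reads $(1-\delta)/2$ with $\delta$ free to be taken small, rather than merely $N\to\infty$. No step here presents a substantive obstacle.
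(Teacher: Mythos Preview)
Your proposal is correct and follows exactly the approach of the paper, which simply says to apply the Chebyshev--Markov inequality together with Theorem~\ref{thmpl} and \cite[Proposition~1.2]{maff18}. You have merely spelled out the routine exponent bookkeeping that the paper leaves implicit.
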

\begin{proof}
Apply the Chebychev-Markov inequality together with Theorem \ref{thmpl} and \cite[Proposition 1.2]{maff18}.
\end{proof}

Furthermore, one may improve on Theorem \ref{thmpl} conditionally on the following conjecture.
\begin{conj}[Bourgain and Rudnick {\cite[\S 2.2]{brgafa}}]
\label{brgafaconj}
Let $\chi(R,s)$ be the maximal number of lattice points in a cap of radius $s$ of the sphere $R\mathcal{S}^2$. Then for all $\epsilon>0$ and $s<R^{1-\delta}$,
\begin{gather*}
\chi(R,s)
\ll
R^\epsilon\left(1+\frac{s^2}{R}\right)
\end{gather*}
as $R\to\infty$.
\end{conj}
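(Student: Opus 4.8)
The plan is to treat the two natural regimes separately and to pinpoint the sharp middle range as the crux. Throughout take $R=\sqrt m$ with $m\not\equiv 0,4,7\pmod 8$, so that the points to be counted are exactly $\Lambda_m$ of \eqref{Lambda}, and record the equidistribution heuristic: a cap of radius $s$ has area $\asymp s^2$ out of the total $\asymp R^2$, so with $N=r_3(m)=R^{1+o(1)}$ by \eqref{totnumlp3} one expects $N\cdot s^2/R^2\asymp R^{o(1)}\,s^2/R$ points. The conjectured bound is precisely this heuristic plus the unavoidable $R^\epsilon$, which reflects that even a single planar section of $R\mathcal{S}^2$ may already carry up to $\kappa(R)\ll R^\epsilon$ lattice points by Jarnik \eqref{kappa3bound}.

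For the small-cap regime, fix one lattice point $\lambda_0$ in the cap and use $\langle\lambda,\lambda_0\rangle=m-\tfrac12|\lambda-\lambda_0|^2$: since every $\lambda$ in the cap has $|\lambda-\lambda_0|\le 2s$ and $\langle\lambda,\lambda_0\rangle\in\mathbb{Z}$, the inner product takes at most $O(s^2)$ integer values. Each value pins $\lambda$ to a planar section of $R\mathcal{S}^2$ — a circle carrying $\le\kappa(R)\ll R^\epsilon$ lattice points — so one obtains the unconditional bound $\chi(R,s)\ll s^{2+\epsilon}$. This matches the conjecture (up to $R^\epsilon$) only once the $s^2$ candidate planes are shown to collapse to $O(s^2/R)$ occupied ones, i.e. once the height $\langle\lambda,\lambda_0\rangle$ is itself nearly equidistributed in its admissible window; establishing that collapse is the true content even here.

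For the large-cap regime the target $R^\epsilon s^2/R$ is a genuine equidistribution statement, and the natural tool is the spectral theory of half-integral weight modular forms: the radial projections $\lambda/\sqrt m$, $\lambda\in\Lambda_m$, equidistribute on $\mathcal{S}^2$ by the theorems of Duke and of Golubeva--Fomenko--Iwaniec, with a power-saving discrepancy $\ll m^{-\eta}$ inherited from subconvex bounds for the relevant $L$-functions. Testing this against a smooth majorant of the cap, resolved by spherical harmonics up to degree $\asymp R/s$, turns the discrepancy into an error negligible against the main term $\asymp R^{o(1)}s^2/R$ provided $s\ge R^{1-c}$ for a fixed small $c=c(\eta)>0$. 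Thus this route delivers the conjecture for all sufficiently large caps.

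The hard part — and the reason the statement remains a conjecture — is the intermediate range $R^{1/2}\lesssim s\lesssim R^{1-c}$, where neither argument is sharp: the Jarnik reduction overcounts by the full factor $R$, while the unconditional equidistribution rate $\eta$ is too weak to reach these scales. Closing the gap amounts to proving near-optimal (essentially Lindel\"of-strength) bounds for the Weyl sums $\sum_{\lambda\in\Lambda_m}e^{2\pi i\langle\xi,\lambda\rangle}$ uniformly over frequencies $\xi$ of size $\asymp R/s$, equivalently to pushing the local-statistics results of Bourgain--Rudnick--Sarnak down to the full range $s<R^{1-\delta}$. I would expect any complete proof to require genuinely new input on the distribution of $\Lambda_m$ at the critical scale $s\asymp R^{1/2}$, where the transition from ``boundedly many points'' to ``equidistribution'' takes place.
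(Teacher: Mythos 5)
The statement you were asked to prove is a \emph{conjecture} of Bourgain and Rudnick, and the paper contains no proof of it: it is quoted from \cite[\S 2.2]{brgafa} and enters the paper only as a hypothesis, through Corollary \ref{covercapscor2} (\cite[Corollary 5.6]{maff3d}), for the conditional variance bound of Theorem \ref{thmc}. There is therefore no proof in the paper to compare yours against, and your proposal --- correctly --- does not claim to supply one: it is an accurate survey of why the statement is open, and your identification of the intermediate range of $s$ as the genuine obstruction is consistent with the state of the art in \cite{bosaru,bsr016}. Had you claimed the regime-splitting sketch as a proof, that would have been a fatal gap; as written, your self-assessment is sound.

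Two corrections to the partial arguments you sketch. First, your small-cap slicing bound is correct as far as it goes (fixing $\lambda_0$ in the cap, $\langle\lambda,\lambda_0\rangle=m-\tfrac12|\lambda-\lambda_0|^2$ is an integer taking $O(s^2)$ values, and each planar section carries $\leq\kappa(R)\ll R^\epsilon$ points by Jarnik), but the resulting $\chi(R,s)\ll R^\epsilon s^2$ is strictly weaker than the unconditional Lemma \ref{lemmachi} already quoted in the paper, namely $\chi(R,s)\ll R^\epsilon\left(1+s^2/R^{1/2}\right)$. Bourgain and Rudnick gain the extra factor $R^{1/2}$ by a determinant argument: any four lattice points in a cap of radius $\ll R^{1/4}$ span a tetrahedron of volume less than $1/6$, hence zero, so all lattice points in such a cap are coplanar and number $\ll\kappa(R)\ll R^\epsilon$; a cap of radius $s$ is then covered by $\asymp s^2/R^{1/2}$ caps of radius $R^{1/4}$. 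So the true unconditional frontier at small $s$ is $R^{1/2}$ in the denominator, not $1$, and your ``collapse of occupied planes'' discussion should be calibrated against that benchmark. Second, in the large-cap regime note that the conjecture is stated for $s<R^{1-\delta}$, while the equidistribution-with-power-saving route you describe reaches only $s\geq R^{1-c}$ for some small $c$ determined by the subconvexity exponent; this overlaps the conjectured range but does not exhaust it, which is precisely why the conjecture remains open even near its upper endpoint, and why the paper can use it only conditionally.
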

We have the following conditional improvement for planes of type \eqref{rr}.
\begin{thm}
\label{thmc}
Let $\Pi$ be a smooth $2$-dimensional sub-manifold of $\mathbb{T}^3$ contained in a plane. Assuming Conjecture \ref{brgafaconj}, we have for every $\epsilon>0$
\begin{equation}
\label{varplc}
\text{Var}(\mathcal{L})\ll_{\Pi}\frac{m}{N}\cdot N^{1/2+\epsilon}.
\end{equation}
\end{thm}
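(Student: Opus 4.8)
The plan is to re-run the Kac--Rice reduction already carried out in section \ref{secpl} for Theorem \ref{thmpl}, changing only the arithmetic input at the single place where lattice points are counted. For $\Pi$ inside a plane the covariance of $F|_\Pi$ at two points $p,p'$ depends only on the projected frequencies $\lambda^\Pi$ (the orthogonal projections onto the plane of the $\lambda\in\Lambda$), and the second-moment analysis bounds $\text{Var}(\mathcal L)$ by $m/N$ times a count of pairs of lattice points on $\sqrt m\,\mathcal{S}^2$ whose projections onto the plane almost coincide. Such pairs concentrate where the projection degenerates, that is near the great circle along which $\overrightarrow n$ is tangent to the sphere (the ``equator'' of the plane); organised dyadically, this count is governed by the number $\chi(\sqrt m,s)$ of lattice points in spherical caps, the radius $s$ measuring how far from the equator the relevant points sit. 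For rational planes the degenerate configurations are genuinely coplanar, which is why \eqref{varplr} carries the factor $\kappa(\sqrt m)$; for irrational planes one meets thickened planar sections, i.e.\ caps, instead.

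First I would isolate, verbatim from the type-\eqref{rr} computation behind \eqref{varpli}, the intermediate inequality expressing $\text{Var}(\mathcal L)$ as $(m/N)\cdot m^\epsilon$ times a dyadic sum of cap counts $\chi(\sqrt m,s)$ weighted by the geometric kernel of the restricted second moment. The kernel fixes a critical cap radius $s_*\asymp m^{3/8}=(\sqrt m)^{3/4}$ at which the weighted sum concentrates. In the unconditional argument one has there only a cap estimate linear in $s$, giving $\chi(\sqrt m,s_*)\ll m^{3/8+\epsilon}\asymp N^{3/4+\epsilon}$ and hence the exponent $a=3/4$ in \eqref{varpli}. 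Under Conjecture \ref{brgafaconj} I would instead use $\chi(\sqrt m,s)\ll m^\epsilon(1+s^2/\sqrt m)$, so that $\chi(\sqrt m,s_*)\ll m^{1/4+\epsilon}$; since $(\sqrt m)^{1-\epsilon}\ll N$ by \eqref{totnumlp3} this is $\asymp N^{1/2+\epsilon}$, and as $s_*\asymp(\sqrt m)^{3/4}$ lies inside the admissible range $s<(\sqrt m)^{1-\delta}$ of the conjecture, the identical reduction now returns \eqref{varplc} for planes of type \eqref{rr}.

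The bound is then uniform over all planes, whatever their type. For rational planes, \eqref{varplr} together with Jarnik's estimate \eqref{kappa3bound} already yields $\text{Var}(\mathcal L)\ll_\Pi (m/N)\,m^\epsilon$, and for planes of type \eqref{qr} the unconditional exponent $3/7<1/2$ of \eqref{varpli} is smaller still; in both cases \eqref{varplc} holds a fortiori and unconditionally. Thus only type \eqref{rr} actually invokes the conjecture.

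The hard part is the uniform control of the near-equatorial caps, i.e.\ the lattice points sitting within distance $\asymp s_*$ of the circle where $\overrightarrow n$ is tangent to $\sqrt m\,\mathcal{S}^2$. This is precisely the range in which equidistribution of lattice points on the sphere is unavailable — the bands in question are far too thin for Duke-type input — so the entire gain rests on having a density-consistent cap bound valid down to $s_*$; the unconditional argument loses exactly here, which is the origin of the weaker $3/4$. The delicate points to verify are that the dyadic sum over cap radii is genuinely dominated by $s_*$ and does not accumulate an extra power of $m$ from the many smaller scales or from caps straddling the equator, that the dependence on the Diophantine type of $\overrightarrow n$ is confined to the implied constant $\ll_\Pi$, and that the boundary of $\Pi$ contributes only to lower order, so that the smooth second-moment kernel of section \ref{secpl} controls the full variance.
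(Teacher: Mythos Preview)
Your sketch does not match the paper's argument, and more importantly the mechanism you describe does not produce the numbers you claim. The paper never sets up a dyadic sum of cap counts $\chi(\sqrt m,s)$ over radii, and there is no ``critical radius'' $s_*\asymp m^{3/8}$. The unconditional type-\eqref{rr} bound in section~\ref{secirrpl} comes from a three-regime split governed by two thresholds, one on $|\langle\lambda-\lambda',\xi\rangle|$ (height of a segment in direction $\xi$) and one on $|\langle\lambda-\lambda',\eta\rangle|/|\lambda-\lambda'|$ (opening angle of a segment in direction $\eta$); the exponent $3/4$ arises from optimising these two parameters against the segment bounds \eqref{psi2}, \eqref{psi3} and the Riesz-energy tail, not from a single cap of radius $m^{3/8}$. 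Your assertion that the unconditional cap bound is ``linear in $s$'' is also off: Lemma~\ref{lemmachi} gives $\chi(R,s)\ll R^\epsilon(1+s^2/R^{1/2})$, so at $s=m^{3/8}$ one gets $m^{1/2+\epsilon}\asymp N^{1+\epsilon}$, not $N^{3/4+\epsilon}$.

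The paper's conditional proof (section~\ref{secc}) is structurally different from the unconditional type-\eqref{rr} argument: it replaces the asymmetric height/angle regimes by a \emph{symmetric} pair of height thresholds $c,c'$ on $|\langle\lambda-\lambda',\xi\rangle|$ and $|\langle\lambda-\lambda',\eta\rangle|$, and the conjecture enters only through Corollary~\ref{covercapscor2}, which upgrades the segment-by-height bound \eqref{psi2} to $\psi\ll R^\epsilon(R^{1/2}+h)$. With this, each of the two ``small'' regimes contributes $NR^\epsilon(R^{1/2}+c)$ and the ``both large'' regime contributes $N^2/(cc')^2$; choosing $c=c'=N^{1/5}$ gives $\mathcal{G}\ll N^{3/2+\epsilon}$ and hence \eqref{varplc} via \eqref{varpl2}. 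The relevant directions throughout are the in-plane vectors $\xi,\eta$, not the normal $\overrightarrow{n}$, so the ``equatorial'' picture you describe is not the geometry being used. Your remark that types \eqref{qq} and \eqref{qr} are already covered unconditionally is correct, but for type \eqref{rr} you would need to carry out the actual regime optimisation rather than the heuristic you propose.
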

\noindent
Theorem \ref{thmc} will be proven in section \ref{secpl}.

\subsection{Outline of proofs and plan of the paper}
\label{secout}
The arithmetic random wave $F$ \eqref{arw} is a {\em random field}. For a smooth random field $P:T\subset_{\text{open}}\mathbb{R}^d\to\mathbb{R}^{d'}$, denote $\mathcal{V}$ the Hausdorff measure of its nodal set. For instance when $d=3$ and $d'=1$ then $\mathcal{V}$ is the nodal area. Only the case $d\geq d'$ is interesting, since otherwise the zero set of $P$ is a.s. \footnote{The expression `almost surely', or for short `a.s.', means `with probability $1$'.} empty. Under appropriate assumptions, the moments of $\mathcal{V}$ may be computed via Kac-Rice formulas \cite[Theorems 6.8 and 6.9]{azawsc}. These formulas, however, do not apply to our situation \cite[Example 1.6]{maff18} (except in the very special case of the plane containing $\Pi$ being parallel to one of the coordinate planes). To resolve this issue, in \cite{maff18} we derived Kac-Rice formulas for a random field defined on a {\em surface}, and thus computed $\mathbb{E}[\mathcal{L}]$.

Via an {\bf approximate Kac-Rice formula} \cite[Proposition 1.7]{maff18}, for surfaces of nowhere vanishing Gauss-Kronecker curvature, the problem of computing the nodal intersection length variance \eqref{var} was reduced to estimating the second moment of the {\em covariance function} 
\begin{equation}
\label{rintro}
r(p,p'):=\mathbb{E}[F(p)F(p')]
\end{equation}
and of its various first and second order derivatives. The error term in \eqref{var} comes from bounding the fourth moment of $r$ and of its derivatives.

For $\Pi$ confined to a plane, we wish to prove the upper bounds in Theorem \ref{thmpl}. An {\bf approximate Kac-Rice bound} will then suffice, similarly to \cite{maff2d, ruwiye, maff3d}.
\begin{prop}[Approximate Kac-Rice bound]
\label{approxKRpl}
Let $\Pi$ be a smooth $2$-dimensional sub-manifold of $\mathbb{T}^3$ contained in a plane. Then we have
\begin{equation}
\label{varpl1}
\text{Var}(\mathcal{L})\ll m\iint_{\Pi^2}
\left(
r^2
+\frac{D\Omega D^T}{m}
+\frac{tr(H\Omega H\Omega)}{m^2}
\right)
dpdp'
\end{equation}
where $D(p,p'),H(p,p'),\Omega$ are appropriate vectors and matrices, depending on $r(p,p')$, its derivatives, and $\Pi$ \footnote{See \cite[Definition 3.3]{maff18}.}.
\end{prop}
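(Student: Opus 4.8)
The plan is to adapt the approximate Kac--Rice strategy of \cite{maff2d, ruwiye, maff3d} to the planar setting, using the Kac--Rice machinery for fields on a surface developed in \cite{maff18}; since only an upper bound is required, the analysis can be cruder than the sharp asymptotic \eqref{var}. Write $f := F|_\Pi$ for the restriction of the arithmetic wave to $\Pi$, a smooth real Gaussian field on the $2$-dimensional plane. By stationarity of $F$ on $\mathbb{T}^3$, the field $f$ is stationary along $\Pi$, so its one-point law and the (unconditioned) covariance of its tangential gradient $\nabla_\Pi f$ are constant in $p$. A coarea/Kac--Rice representation gives, heuristically, $\mathcal{L} = \int_\Pi \delta_0(f(p))\,\|\nabla_\Pi f(p)\|\,dp$; because the formulas of \cite{azawsc} fail here, I would work with the surface Kac--Rice formulas of \cite{maff18}, regularising $\delta_0$ by a smooth approximation and passing to the limit at the end.

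I would then express
\[
\mathrm{Var}(\mathcal{L}) = \iint_{\Pi^2}\big(K_2(p,p') - K_1(p)\,K_1(p')\big)\,dp\,dp',
\]
where $K_1$ is the one-point density and $K_2(p,p')$ is the two-point density, namely the joint Gaussian density of $(f(p),f(p'))$ at the origin times the conditional expectation $\mathbb{E}\big[\,\|\nabla_\Pi f(p)\|\,\|\nabla_\Pi f(p')\| \;\big|\; f(p)=f(p')=0\,\big]$. The two-point correlations of the Gaussian vector $(f(p),f(p'),\nabla_\Pi f(p),\nabla_\Pi f(p'))$ are governed by $r(p,p')$ (value--value), by the first-order data collected in $D(p,p')$ (value--gradient), and by the second-order data in $H(p,p')$ (gradient--gradient), while the fixed normalising matrix $\Omega$ is built from the unconditioned gradient covariance, following \cite[Definition 3.3]{maff18}. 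Since $\mathbb{E}\,\|\nabla_\Pi f\|^2 \asymp m$, one has $K_1 \asymp \sqrt{m}$, which accounts for the overall factor $m$ in \eqref{varpl1}.

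The analytic core is to bound $K_2 - K_1 K_1$ by Gaussian regression: conditioning the two gradients on $\{f(p)=f(p')=0\}$ and comparing with the independent product, the zeroth-order term cancels against $K_1 K_1$, and a standard estimate controlling the covariance of the norms of two jointly Gaussian vectors by the squared entries of their cross-covariance matrix bounds the remainder. Tracking the three sources of correlation and their $m$-scalings (the first derivatives in $D$ carrying one extra power of $\sqrt{m}$, the second derivatives in $H$ two, the denominators in \eqref{varpl1} absorbing them) produces exactly the integrand $r^2 + D\Omega D^T/m + \mathrm{tr}(H\Omega H\Omega)/m^2$.

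The main obstacle is the diagonal $p=p'$, where the covariance of $(f(p),f(p'))$ degenerates and the two-point density $K_2$ is singular, so the perturbative expansion is invalid. I would isolate a shrinking diagonal neighbourhood and estimate its contribution directly, using the local non-degeneracy and a.s. smoothness of $f$ to show the singularity is integrable and absorbed into \eqref{varpl1}; away from the diagonal, where $r$ and its derivatives are genuinely small, the regression estimate delivers the quadratic bound. Quantifying this degeneration uniformly in $p,p'$, and verifying that the regularisation limit commutes with the two-point integral, is the delicate step.
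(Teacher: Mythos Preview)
Your overall strategy is close in spirit to the paper's, but there is a genuine gap in how you handle the ``singular'' region. You write that away from the diagonal $p=p'$ the covariance $r$ and its derivatives are ``genuinely small,'' and propose to isolate only a shrinking diagonal neighbourhood. For arithmetic waves this is false: $r(p,p')$ depends only on $p'-p$ and is a trigonometric polynomial with $N$ frequencies, so it can be close to $\pm 1$ at many off-diagonal points (near-recurrences of the exponential sum). At every such point the Gaussian regression expansion of $K_2$ around $K_1K_1$ breaks down just as badly as on the diagonal, and your perturbative bound does not apply there. Isolating only the diagonal tube would leave uncontrolled contributions.

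The paper's proof sidesteps this by never attempting a global $K_2-K_1K_1$ bound. Instead it partitions $\Pi$ into squares of side $\asymp 1/\sqrt{m}$, writes $\mathrm{Var}(\mathcal{L})=\sum_{i,j}\mathrm{Cov}(\mathcal{L}_i,\mathcal{L}_j)$, and declares a pair $(i,j)$ \emph{singular} whenever $|r|>1/2$ somewhere on $\Pi_i\times\Pi_j$. On non-singular pairs the regression argument (from \cite[\S 5]{maff18}) gives exactly the integrand in \eqref{varpl1}. On singular pairs one uses only the crude uniform bound $\mathrm{Cov}(\mathcal{L}_i,\mathcal{L}_j)\ll 1/m$ coming from the second-moment Kac--Rice formula on a single small square; the total singular contribution is then $\ll m\cdot\mathrm{meas}(S)$, and a Chebyshev-type argument (\cite[Lemma 3.12]{maff18}) gives $\mathrm{meas}(S)\ll\iint_{\Pi^2}r^2$, which is already part of the right-hand side of \eqref{varpl1}. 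This is the missing idea in your proposal: define the singular set by a threshold on $|r|$ rather than by proximity to the diagonal, and control its measure via $\iint r^2$ rather than by geometry.
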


Proposition \ref{approxKRpl} will be proven in section \ref{secappkr}. The problem of bounding the variance of $\mathcal{L}$ is thus reduced to estimating the second moment of the covariance function $r$ and its various first and second order derivatives. This, in turn, requires estimates for the number of lattice points in specific regions of the sphere $\sqrt{m}\mathcal{S}^2$, covered in section \ref{capseg}.

There are marked differences compared to the case of generic surfaces: first, if $\Pi$ is contained in a plane of unit normal $\overrightarrow{n}=(n_1,n_2,n_3)$, it admits everywhere the parametrisation
\begin{align}
\notag
\gamma:U\subset\mathbb{R}^2&\to\Pi,
\\
\label{gammapl}
(u,v)&\mapsto(P+u\xi+v\eta),
\end{align}
where $P\in\Pi$ and $\{\overrightarrow{n},\xi,\eta\}$ is an orthonormal basis of $\mathbb{R}^3$ \cite[\S 2.5, Example 1]{docarm}. Then the covariance function \eqref{rintro} has the special form
\begin{equation}
\label{rplane}
r((u,v),(u',v'))
=\frac{1}{N}\sum_{\lambda\in\Lambda} e^{2\pi i\langle\lambda,(u'-u)\xi+(v'-v)\eta\rangle},
\end{equation}
depending on the difference $(u',v')-(u,v)$ only: the random field $f(u,v):=F(\gamma(u,v))$ is {\em stationary} \footnote{In particular we may assume w.l.o.g. that $P$ is the origin.}. This behaviour is very different from the case of generic surfaces. In particular it eventually leads to a different method from \cite{maff18} of controlling the second moment, and specifically the off-diagonal terms. Indeed, in our previous paper, the off-diagonal terms are handled via a generalisation of Van der Corput's lemma to higher dimensions \cite[Proposition 5.4]{maff18}, applicable for surfaces $\Pi$ of nowhere vanishing Gauss-Kronecker curvature. On the other hand if $\Pi$ is confined to a plane, the special form \eqref{rplane} of the covariance function allows us to establish the estimates \eqref{min} directly, leading to a different arithmetic problem from the generic surfaces case.

Similarly to \cite{maff2d,maff3d} (nodal intersections against a straight line in two and three dimensions), in the linear case the variance upper bounds depend on the arithmetic properties of the line/plane. In Theorem \ref{thmpl}, the upper bound is stronger in the case of rational planes, and the bound for planes of type \eqref{qr} is stronger than for those of type \eqref{rr}, again similar to \cite{maff2d,maff3d}. This situation occurs because the bounds rely on estimates for lattice points in specific regions of the sphere: when
\begin{equation*}
\frac{n_3}{n_1}, \ \frac{n_3}{n_2}
\end{equation*}
are irrational numbers, the lattice point estimates are derived using simultaneous Diophantine approximation, so that the bound for the variance is stronger when the number of irrationals to approximate is smaller \cite[\S 8]{maff3d}.

\subsection{Acknowledgements}
The author worked on this project mainly during his PhD studies, under the supervision of Igor Wigman. The author is very grateful to Igor for suggesting this very interesting problem, and for insightful remarks. The author was funded by a Graduate Teaching Scholarship, Department of Mathematics, King's College London. The author was supported by the Engineering \& Physical Sciences Research Council (EPSRC) Fellowship EP/M002896/1 held by Dmitry Belyaev.

\section{Kac-Rice bound: Proof of Proposition \ref{approxKRpl}}
\label{secappkr}
\subsection{Setup}
We fix a smooth $2$-dimensional sub-manifold $\Pi$ of $\mathbb{T}^3$ confined to a plane, denoting the unit normal $\overrightarrow{n}=(n_1,n_2,n_3)$. Then w.l.o.g. $\Pi$ admits everywhere the parametrisation (cf. \eqref{gammapl})
\begin{align}
\notag
\gamma:[0,A]\times[0,B]\subset\mathbb{R}^2&\to\Pi,
\\
\label{gammaplagain}
(u,v)&\mapsto p=u\xi+v\eta,
\end{align}
where $\{\overrightarrow{n},\xi,\eta\}$ is an orthonormal basis of $\mathbb{R}^3$,
\begin{equation}
\label{AB}
A:=\max\{u: u\xi+v\eta\in\Pi\},
\quad
\text{ and }
\quad
B:=\max\{v: u\xi+v\eta\in\Pi\}.
\end{equation}
Later we will choose (assuming w.l.o.g that $n_1\neq 0$)
\begin{equation}
\label{xieta}
\xi=\frac{(n_2,-n_1,0)}{\sqrt{n_1^2+n_2^2}}, \qquad\qquad \eta=\frac{(n_1n_3,n_2n_3,-n_1^2-n_2^2)}{\sqrt{n_1^2+n_2^2}}.
\end{equation}

We now introduce some necessary notation for the derivatives of the covariance function $r$ \eqref{rplane}. 
\begin{defin}
\label{thedef}
Define the row vector $D:=\nabla r$,
\begin{equation*}
D((u,v),(u',v'))=\frac{2\pi i}{N}
\sum_{\lambda\in\Lambda} e^{2\pi i\langle\lambda, (u'-u)\xi+(v'-v)\eta\rangle}\cdot\lambda
\end{equation*}
	and the Hessian matrix $H:=\text{Hess}(r)$,
	\begin{equation*}
	H((u,v),(u',v'))=-\frac{4\pi^2}{N}
	\sum_{\lambda\in\Lambda} e^{2\pi i\langle\lambda, (u'-u)\xi+(v'-v)\eta\rangle}\cdot\lambda^T\lambda.
	\end{equation*}
	We also introduce the matrix
	\begin{equation*}
	\Omega:=\begin{pmatrix}
	n_2^2+n_3^2 & -n_1n_2 & -n_1n_3
	\\ -n_1n_2 & n_1^2+n_3^2 & -n_2n_3
	\\ -n_1n_3 & -n_2n_3 & n_1^2+n_2^2
	\end{pmatrix}.
	\end{equation*}
\end{defin}

\subsection{Proof of Proposition \ref{approxKRpl}}
We bring some modifications to the proof of Proposition \cite[Proposition 1.7]{maff18}. 
With the notation of the parametrisation \eqref{gammaplagain}, consider the rectangle $U$ of vertices the origin, $A\xi$, $B\eta$, and $A\xi+B\eta$. We partition it (with boundary overlaps) into small squares $U_j$ of side length $\delta\asymp 1/\sqrt{m}$. 
\footnote{To be precise, we need $\delta\sqrt{2}<c_0/\sqrt{4\pi m/3}$, with $c_0$ as in \cite[Lemma 3.8]{maff18}.} Writing $\Pi_j:=\Pi\cap U_j$, we denote
\begin{equation*}
\mathcal{L}_{j}:=h_1(\mathcal{A}_F \cap\Pi_j)
\end{equation*}
recalling the notations $\mathcal{A}_F$ \eqref{nodset} for the nodal set and $h_1$ for Hausdorff measure. Then for \eqref{L} one has a.s.
\begin{equation*}
\mathcal{L}=\sum_j\mathcal{L}_{j}.
\end{equation*}
It follows that
\begin{equation}
\label{snspre}
\text{Var}(\mathcal{L})
=
\sum_{i,j}\text{Cov}(\mathcal{L}_i,\mathcal{L}_j).
\end{equation}

The set $\Pi^2$ is thus partitioned (with boundary overlaps) into regions $\Pi_i\times\Pi_j=:V_{i,j}$. We call the region $V_{i,j}$ {\em singular} if there are points $p\in \Pi_i$ and $p'\in \Pi_j$ s.t. $|r(p,p')|>1/2$. The union of all singular regions is the {\em singular set} $S$. It was proven in \cite[Lemma 3.12]{maff18} that
\begin{equation}
\label{Sb}
\text{meas}(S)\ll\iint_{\Pi^2}r^2(p,p')dpdp'.
\end{equation}
We separate the summation \eqref{snspre} over singular and non-singular regions:
\begin{equation}
\label{sns}
\text{Var}(\mathcal{L})
=
\sum_{V_{i,j} \text{ non-sing}} \text{Cov}(\mathcal{L}_i,\mathcal{L}_j)
+
\sum_{V_{i,j} \text{ sing}} \text{Cov}(\mathcal{L}_i,\mathcal{L}_j).
\end{equation}
In \cite[\S 3.4]{maff18} we showed the uniform bound
\begin{equation*}
\text{Cov}(\mathcal{L}_i,\mathcal{L}_j)\ll\frac{1}{m}
\end{equation*}
hence
\begin{equation}
\label{bdsingpl}
\bigg|\sum_{V_{i,j} \text{ sing}} \text{Cov}(\mathcal{L}_i,\mathcal{L}_j)\bigg|
\ll m\iint_{\Pi^2}r^2(p,p')dpdp'
\end{equation}
via \eqref{Sb}.

For non-singular regions, Kac-Rice formulae yield (see \cite[(3.19), \S 5.2, and \S 5.3]{maff18})
\begin{equation}
\label{covpl}
\text{Cov}(\mathcal{L}_i,\mathcal{L}_j)
\ll
m\iint_{V_{i,j}}
\left(
r^2
+\frac{D\Omega D^T}{m}
+\frac{tr(H\Omega H\Omega)}{m^2}
\right)
dpdp'
\end{equation}
with $D,H,\Omega$ as in Definition \ref{thedef}. We substitute \eqref{covpl} and \eqref{bdsingpl} into \eqref{sns}, and extend the domain of integration to the whole of $\Pi^2$ via another application of \eqref{Sb}. The proof of Proposition \ref{approxKRpl} is thus complete.

\section{Lattice points on spheres}
\label{seclp}
\subsection{Background}
To estimate the second moment of the covariance function $r$ and of its derivatives (the RHS of \eqref{varpl1}), we will need several considerations on lattice points on spheres $\sqrt{m}\mathcal{S}^2$. An integer $m$ is representable as a sum of three squares if and only if it is not of the form $4^l(8k+7)$, for $k,l$ non-negative integers \cite{harwri, daven1}. Recall the notation \eqref{N} $N:=|\Lambda|=r_3(m)$ for the number of such representations. Under the natural assumption $m\not\equiv 0,4,7 \pmod 8$ one has \eqref{totnumlp3}
\begin{equation*}
(\sqrt{m})^{1-\epsilon}
\ll
N
\ll
(\sqrt{m})^{1+\epsilon}.
\end{equation*}

Subtle questions about the distribution of $\Lambda/\sqrt{m}$ in the unit sphere as $m\to\infty$ are of independent interest in number theory. The limiting equidistribution of the lattice points was conjectured and proved conditionally by Linnik, and subsequently proven unconditionally \cite{duke88,dukesp,golfom}. The finer statistics of $\Lambda/\sqrt{m}$ on shrinking sets has been recently investigated by Bourgain-Rudnick-Sarnak \cite{bosaru,bsr016}.
\begin{prop}[{\cite[Theorem 1.1]{bsr016}}]
\label{asyriesz}
Fix $0<s<2$. Suppose $m\to\infty$, $m\not\equiv 0,4,7 \pmod 8$. There is some $\delta>0$ so that
\begin{equation*}
\sum_{\lambda\neq\lambda'}\frac{m^{s/2}}{|\lambda-\lambda'|^{s}}=\frac{2^{1-s}}{2-s}\cdot N^2+O(N^{2-\delta}).
\end{equation*}
\end{prop}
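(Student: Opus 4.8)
The plan is to strip the $m$-dependence out of the kernel and reduce to a Riesz $s$-energy of the \emph{normalised} lattice points on the unit sphere, then extract the main term from equidistribution. Since every $\lambda\in\Lambda$ satisfies $|\lambda|=\sqrt m$, writing $\hat\lambda:=\lambda/\sqrt m\in\mathcal S^2$ gives $|\lambda-\lambda'|=\sqrt m\,|\hat\lambda-\hat\lambda'|$, so each summand equals $m^{s/2}/(\sqrt m\,|\hat\lambda-\hat\lambda'|)^s=|\hat\lambda-\hat\lambda'|^{-s}$ and the whole sum is exactly
\begin{equation*}
E_s(\hat\Lambda):=\sum_{\lambda\neq\lambda'}|\hat\lambda-\hat\lambda'|^{-s}.
\end{equation*}
Thus only the distribution of $\hat\Lambda$ on $\mathcal S^2$ is relevant. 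Denoting by $\bar\sigma$ the uniform probability measure on $\mathcal S^2$, a direct computation (fix one point at the north pole, use $|x-y|=2\sin(\theta/2)$ and substitute $u=\sin(\theta/2)$) yields $\iint_{\mathcal S^2\times\mathcal S^2}|x-y|^{-s}\,d\bar\sigma(x)\,d\bar\sigma(y)=2^{1-s}/(2-s)=:I_s$, finite precisely because $s<2$. This identifies the conjectured main-term constant, so the target is $E_s(\hat\Lambda)=I_s N^2+O(N^{2-\delta})$.

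First I would fix a threshold $\eta$ and split $E_s$ into a \emph{far} part over pairs with $|\hat\lambda-\hat\lambda'|\ge\eta$ and a \emph{near} part over $0<|\hat\lambda-\hat\lambda'|<\eta$. On the far part the kernel is smooth, so with the empirical measure $\mu_m:=\tfrac1N\sum_{\hat\lambda}\delta_{\hat\lambda}$ one has $E_s^{\mathrm{far}}=N^2\iint_{|x-y|\ge\eta}|x-y|^{-s}\,d\mu_m\,d\mu_m$. Expanding the $\eta$-truncated kernel in spherical harmonics in each variable turns this into a bilinear form in the Weyl sums $W_{n,k}:=\sum_{\hat\lambda}Y_{n,k}(\hat\lambda)$: the degree-zero harmonic reproduces $N^2\bigl(I_s-J(\eta)\bigr)$, where $J(\eta):=\iint_{|x-y|<\eta}|x-y|^{-s}\,d\bar\sigma\,d\bar\sigma\asymp\eta^{2-s}$, and the higher degrees are controlled by effective equidistribution of $\hat\Lambda$, i.e. a power saving $|W_{n,k}|\ll_n N^{1-\delta_0}$ of Duke-type. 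This gives $E_s^{\mathrm{far}}=N^2\bigl(I_s-J(\eta)\bigr)+O_\eta(N^{2-\delta_0})$.

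Next I would recover the remaining mass $N^2 J(\eta)$ of the main term from the near part, which is where the lattice-point counting of section \ref{seclp} enters. I would split the near part again at an intermediate scale $N^{-\beta}$. On the annulus $N^{-\beta}\le|\hat\lambda-\hat\lambda'|<\eta$, small-scale equidistribution of $\hat\Lambda$ (cap counts with main term and a power-saving remainder) reconstructs the corresponding piece of $N^2 J(\eta)$ up to $O(N^{2-\delta})$. On the innermost range, where distinct lattice points obey the hard floor $|\lambda-\lambda'|\ge1$, hence $|\hat\lambda-\hat\lambda'|\ge m^{-1/2}=N^{-1+o(1)}$, I would only need an upper bound: counting pairs dyadically with $|\hat\lambda-\hat\lambda'|\asymp 2^{-j}$ reduces the inner count to the number of lattice points of $\sqrt m\,\mathcal S^2$ in a cap of radius $\asymp 2^{-j}\sqrt m$, controlled by $\chi(\sqrt m,\cdot)$ and crudely by $\kappa(\sqrt m)\ll m^\epsilon$; summing $2^{js}$ against these counts shows this tail is $O(N^{2-\delta})$ for a suitable choice of $\beta$ and $\eta$.

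The hard part is the error term, and within it the two quantitative inputs must cooperate. One needs effective equidistribution of $\hat\Lambda$ that is \emph{uniform} in the harmonic degree, since the truncated singular kernel involves arbitrarily high degrees as $\eta\to0$, and the dependence of the implied constants on $n$ must be tracked so that summation does not destroy the power saving; the slow decay of the Gegenbauer coefficients $a_n(s)\asymp n^{s-2}$ makes this delicate. Simultaneously, the near-diagonal contribution is genuinely sensitive because the $|x-y|^{-s}$ weight amplifies any clustering of lattice points, so one needs cap-counting strong enough to beat $2^{js}$ across the full dyadic range down to the minimal gap $m^{-1/2}$ — unconditionally via Jarnik's bound, and with room to spare under Conjecture \ref{brgafaconj}. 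Balancing $\eta$ and $\beta$ between these competing errors, and verifying that the surviving exponent $\delta$ is strictly positive, is the crux of the argument.
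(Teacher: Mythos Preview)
The paper does not prove Proposition \ref{asyriesz}: it is quoted verbatim as \cite[Theorem 1.1]{bsr016} (Bourgain--Sarnak--Rudnick) and used as a black box in the third-regime estimate \eqref{3reg}. So there is no ``paper's own proof'' to compare against.

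That said, your sketch is broadly the Bourgain--Sarnak--Rudnick strategy: rescale to the unit sphere, identify the constant $I_s=2^{1-s}/(2-s)$ as the continuous energy, and separate far pairs (handled by effective equidistribution \`a la Duke via spherical-harmonic Weyl sums) from near pairs (handled by cap counting down to the minimal gap). One point worth tightening: you phrase the near-diagonal step as \emph{reconstructing} the missing mass $N^2 J(\eta)$, which would require two-sided cap asymptotics at shrinking scales. It is cleaner --- and this is what the cited proof does --- to take $\eta=\eta(N)\to 0$ as a small negative power of $N$, so that $N^2 J(\eta)\asymp N^2\eta^{2-s}$ is itself already a power saving; then the near part only needs an \emph{upper} bound of the same quality, and the work shifts to tracking the $\eta$-dependence of the spherical-harmonic error on the far side. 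Your closing paragraph correctly flags exactly this balancing as the crux.
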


\subsection{Lattice points in spherical caps and segments}
\label{capseg}
In the present subsection, we collect several bounds for lattice points in certain regions of the sphere. For a more detailed account, see e.g. \cite[\S 2]{brgafa} (spherical caps) and \cite[\S\S 5,6,8]{maff3d} (spherical segments).

\begin{defin}[{\cite[Definition 4.1]{maff3d}}]
\label{defcap}
Given a sphere $\mathfrak{S}$ in $\mathbb{R}^3$ with centre $O$ and radius $R$, and a point $P\in\mathfrak{S}$, we define the \textbf{spherical cap} $\mathcal{T}$ to be the intersection of $\mathfrak{S}$ with the ball $\mathcal{B}_s(P)$ of radius $s$ centred at $P$. We will call $s$ the \textbf{radius of the cap}, and the unit vector $\alpha:=\overrightarrow{OP}/R$ the  \textbf{direction} of $\mathcal{T}$.

The intersection of $\mathfrak{S}$ with the boundary of $\mathcal{B}_s(P)$ is a circle, called the \textbf{base} of $\mathcal{T}$, and the {\bf radius of the base} will be denoted $k$. Let $Q,Q'$ be two points on the base which are diametrically opposite (note $\overline{PQ}=\overline{PQ'}=s$): we define the \textbf{opening angle} of $\mathcal{T}$ to be $\theta=\widehat{QOQ'}$. The \textbf{height} $h$ of $\mathcal{T}$ is the distance between the point $P$ and the disc base.
\end{defin}
We will be considering the sphere of radius
\begin{equation*}
R=\sqrt{m}.
\end{equation*}
If $s$, $h$, $k$ and $\theta$ denote the radius, height, radius of the base, and opening angle of $\mathcal{T}$ respectively, then geometric considerations give us the relations $0\leq s\leq 2R$, $0\leq h\leq 2R$, $0\leq k\leq R$, $0\leq \theta\leq \pi$, and
\begin{equation}
\label{shR}
s^2=2Rh.
\end{equation}

Let us introduce the notation
\begin{equation}
\label{chi(R,s)}
\chi(R,s)
=\max_\mathcal{T}\#\{\lambda\in\mathbb{Z}^3\cap \mathcal{T}\}
\end{equation}
for the maximal number of lattice points contained in any spherical cap $\mathcal{T}\subset R\mathcal{S}^2$ of radius $s$.
\begin{lemma}[Bourgain and Rudnick {\cite[Lemma 2.1]{brgafa}}]
\label{lemmachi}
We have for all $\epsilon>0$,
\begin{equation*}
\chi(R,s)
\ll
R^\epsilon\left(1+\frac{s^2}{R^{1/2}}\right)
\end{equation*}
as $R\to\infty$.
\end{lemma}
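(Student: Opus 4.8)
The plan is to exploit the near-planarity of a small cap together with the classical bound on lattice points lying on a sphere--plane circle. First I would fix a cap $\mathcal{T}$ of radius $s$ and direction $\alpha$, and record the geometric fact that, by \eqref{shR}, all of $\mathcal{T}$ lies in the slab of thickness $h=s^2/(2R)$ between the base plane and the parallel plane through $P$; hence every lattice point $\lambda\in\mathcal{T}$ satisfies $R-h\le\langle\lambda,\alpha\rangle\le R$. After relabelling coordinates I may assume $e_3$ is the axis most aligned with $\alpha$, so that $|\alpha_3|\ge 1/\sqrt{3}$, and consequently each coordinate of $\lambda$ varies over an interval of length $O(s)$ as $\lambda$ ranges over $\mathcal{T}$.

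Next I would slice by the integer planes $\{x_3=t\}$, $t\in\mathbb{Z}$. A lattice point $\lambda\in\mathcal{T}$ lies, with $t=\lambda_3$, on the circle $\{x_3=t\}\cap\sqrt{m}\,\mathcal{S}^2$, which is the intersection of the sphere with a plane; hence it carries at most $\kappa(\sqrt{m})\ll R^\epsilon$ lattice points by \eqref{kappa3bound}. Two regimes then arise according to whether $s\gtrsim R^{1/2}$ (equivalently $h\gtrsim 1$) or not. For large caps, $s\gtrsim R^{1/2}$, one has $s^2/R^{1/2}\gtrsim s$, so the crude estimate ``at most $O(s)$ admissible values of $t$, each slice contributing $\ll R^\epsilon$'' already yields $\chi(R,s)\ll R^\epsilon s\ll R^\epsilon\,s^2/R^{1/2}$, which is admissible. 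Likewise, when $\alpha$ is genuinely aligned with a coordinate axis the slab is thin in that coordinate, only $O(1+h)$ slices are occupied, and on each the relevant circle has small radius, giving the even sharper $s^2/R$ behaviour for free.

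The main work, and the main obstacle, is the regime $1\ll s\ll R^{1/2}$ with $\alpha$ of generic (irrational) direction, where the crude per-slice count $\ll R^\epsilon s$ is too lossy, by a factor $R^{1/2}/s$, and must be improved to $R^\epsilon(1+s^2/R^{1/2})$. Here the spread of $(\lambda_1,\lambda_2)$ on each occupied slice is confined to an arc of length $O(s)$ of a circle of radius $\asymp R$, and the gain over the trivial bound comes from the \emph{spacing} of lattice points on such circles: an arc of length $O(R^{1/2})$ carries only $O_\epsilon(R^\epsilon)$ lattice points, a short-arc estimate of Cilleruelo--C\'ordoba type. The delicate point is to combine this per-arc bound with a count of the number of non-empty slices, the latter being controlled through the slab thickness $h=s^2/(2R)$ and the orientation of $\alpha$; balancing these two quantities is exactly where the exponent $1/2$ enters, and the unconditional loss of $R^{1/2}$ relative to Conjecture \ref{brgafaconj} reflects the failure of square-root cancellation for lattice points on circles. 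I expect this bookkeeping to be the crux, the two extreme cases being routine by comparison.
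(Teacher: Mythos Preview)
The paper does not supply a proof; it simply quotes the result from Bourgain--Rudnick. Their argument is quite different from your slicing strategy and is worth recording. Suppose a cap $\mathcal{T}$ of radius $s$ contains four lattice points $P_1,\dots,P_4$ that are not coplanar. The tetrahedron they span has volume $\tfrac{1}{6}|\det(P_2-P_1,P_3-P_1,P_4-P_1)|$, and this determinant is a nonzero integer, so the volume is at least $1/6$. On the other hand $\mathcal{T}$ sits inside a cylinder of base radius $\le s$ and height $h=s^2/(2R)$, so in an orthonormal frame adapted to $\alpha$ each vector $P_i-P_1$ has two components $O(s)$ and one component $O(h)$; expanding the determinant gives volume $O(s^2h)=O(s^4/R)$. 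Hence for $s\le cR^{1/4}$ all lattice points of $\mathcal{T}$ are coplanar, and Jarnik's bound \eqref{kappa3bound} yields $|\Lambda\cap\mathcal{T}|\ll R^\epsilon$. A cap of radius $s>R^{1/4}$ is covered by $O((s/R^{1/4})^2)=O(s^2/R^{1/2})$ caps of radius $R^{1/4}$, each contributing $\ll R^\epsilon$, and the lemma follows.

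Your proposal is fine in the easy regime $s\gtrsim R^{1/2}$, but the hard regime carries a genuine gap. Grant the short-arc input in the strongest plausible form: an arc of length $\ll R^{1/2}$ on a circle of radius $\asymp R$ contains $O(1)$ lattice points. Slicing by $\{x_3=t\}$ then gives $O(1)$ points per slice and $O(s)$ slices, hence $\chi(R,s)\ll s$. At $s\asymp R^{1/4}$ this is $R^{1/4}$, whereas the target $R^\epsilon(1+s^2/R^{1/2})$ is $\asymp R^\epsilon$: you are off by a full power $R^{1/4}$. The ``balance'' you invoke --- cutting the slice count via the slab thickness $h$ --- does not help for generic $\alpha$, because the slab is orthogonal to $\alpha$, not to $e_3$; with $|\alpha_3|\asymp 1$ but $\alpha\ne e_3$ the $x_3$-extent of the cap is genuinely $\asymp s$, not $\asymp h$. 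No amount of bookkeeping across slices recovers the loss: the missing idea is the coplanarity observation above, which replaces the $O(s)$ slice count by a single application of Jarnik.
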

Compare Lemma \ref{lemmachi} with Conjecture \ref{brgafaconj}. We now introduce another particular region of the sphere, the segment (sometimes called `slab' or `annulus').
\begin{defin}
\label{defseg}
Given a sphere $\mathfrak{S}$ in $\mathbb{R}^3$ with centre $O$ and radius $R$, and two parallel planes $\Pi_1,\Pi_2$, we call \textbf{spherical segment} $\Gamma$ the region of the sphere delimited by $\Pi_1,\Pi_2$. The two \textbf{bases} of $\Gamma$ are the circles $\mathfrak{S}\cap\Pi_1$ and $\mathfrak{S}\cap\Pi_2$: we always assume the latter to be the larger. We define the \textbf{height} $h$ of the spherical segment to be the distance between $\Pi_1$ and $\Pi_2$. We will denote $k$ the \textbf{radius of the larger base}.

Moreover, let $\mathfrak{C}$ be a great circle of the sphere $\mathfrak{S}$, lying on a plane perpendicular to $\Pi_1$ and $\Pi_2$. Denote $\{A,B\}:=\mathfrak{S}\cap\Pi_1\cap\mathfrak{C}$ and $\{C,D\}:=\mathfrak{S}\cap\Pi_2\cap\mathfrak{C}$. We define the \textbf{opening angle} of $\mathfrak{S}$ to be $\theta=\widehat{AOC}+\widehat{BOD}=2\cdot\widehat{AOC}$. The \textbf{direction} of the spherical segment is the unit vector $\alpha$ that is the direction of the two spherical caps $\mathcal{T}_1,\mathcal{T}_2$ satisfying
$
\mathfrak{S}=\mathcal{T}_2\setminus \mathcal{T}_1.
$
\end{defin}
A cap is thus a special case of a segment. It will be convenient to always assume a spherical segment $\Gamma$ to be contained in a hemisphere, so that any two of $h,k,\theta$ completely determine $\Gamma$. We always have $0\leq h\leq R$, $0\leq k\leq R$, $0\leq \theta\leq \pi$ and the relation \cite[Lemma 5.3]{maff3d}
\begin{equation}
\label{kth}
k\theta\ll h
\end{equation}
as $R\to\infty$.

Next, we state two lemmas of \cite{maff3d} which will be needed later. 
\begin{lemma}[{\cite[Lemma 9.1]{maff3d}}]
\label{twoparpla}
Given $0<c<R$, fix a point $P\in R\mathcal{S}^2$, and let $\alpha$ be a unit vector. Then all points $P'\in R\mathcal{S}^2$ satisfying $|\langle P-P',\alpha\rangle|\leq c$ lie on the same spherical segment, of height (at most) $2c$ and direction $\alpha$ on $R\mathcal{S}^2$.
\end{lemma}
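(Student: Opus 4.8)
The plan is to read off the spherical segment directly from the defining inequality, since that inequality already pins down a pair of parallel planes normal to $\alpha$. First I would set $t_0:=\langle P,\alpha\rangle$ and rewrite the hypothesis $|\langle P-P',\alpha\rangle|\le c$ as the two-sided bound
\[
t_0-c\le\langle P',\alpha\rangle\le t_0+c .
\]
Thus the admissible points $P'$ are exactly those on $R\mathcal{S}^2$ whose orthogonal projection onto the line $\mathbb{R}\alpha$ falls in an interval of length $2c$ centred at $t_0$.

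Next I would introduce the two planes $\Pi_1:=\{x:\langle x,\alpha\rangle=t_0-c\}$ and $\Pi_2:=\{x:\langle x,\alpha\rangle=t_0+c\}$. Both have the unit vector $\alpha$ as normal, so they are parallel and perpendicular to the direction $\alpha$, and the admissible set is precisely the portion of $R\mathcal{S}^2$ lying between them, i.e. the spherical segment $\Gamma$ delimited by $\Pi_1,\Pi_2$ in the sense of Definition \ref{defseg}. Because $\|\alpha\|=1$, the distance between $\Pi_1$ and $\Pi_2$ equals $(t_0+c)-(t_0-c)=2c$, so $\Gamma$ has height $2c$. For the direction, I would exhibit $\Gamma=\mathcal{T}_2\setminus\mathcal{T}_1$ with both caps apexed at the single point $R\alpha\in R\mathcal{S}^2$: taking $\mathcal{T}_2$ to be the cap reaching down to $\Pi_1$ and $\mathcal{T}_1$ the smaller cap reaching down to $\Pi_2$, the relation \eqref{shR} records their radii explicitly, and their common direction $\alpha$ is by definition the direction of $\Gamma$. (If $t_0<0$ this still goes through, $\mathcal{T}_2$ simply being a cap exceeding a hemisphere.)

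The only point requiring care, and the reason for the qualifier "at most" in the stated height, is the boundary behaviour when an endpoint of $[t_0-c,\,t_0+c]$ leaves $[-R,R]$, i.e. when one (or both) of $\Pi_1,\Pi_2$ fails to meet the sphere. In that case I would clip the offending endpoint to the tangent plane $\langle x,\alpha\rangle=\pm R$: the admissible set of $P'$ is unchanged, but the effective separation of the bounding planes drops below $2c$, whence height at most $2c$. Since $0<c<R$ forces the interval to have length $2c<2R$, the slab genuinely meets the sphere and $\Gamma$ is nonempty and well defined. I do not anticipate any substantive obstacle here: the lemma is essentially a restatement of a linear inequality in the vocabulary of Definition \ref{defseg}, and the single subtlety is the bookkeeping of these clipped cases together with a check of the direction convention, both handled above.
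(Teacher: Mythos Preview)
The paper does not prove this lemma; it is quoted verbatim from \cite[Lemma 9.1]{maff3d} and used as a black box in section~\ref{seclp}. Your argument is correct and is exactly the natural one: the condition $|\langle P-P',\alpha\rangle|\le c$ is a linear slab of width $2c$ normal to $\alpha$, and intersecting with the sphere gives a segment of height at most $2c$ in the sense of Definition~\ref{defseg}, with the clipping observation accounting for the qualifier ``at most''. There is nothing to compare against in the present paper, and no gap in what you wrote.
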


\begin{lemma}[{\cite[Lemma 7.1]{maff3d}}]
\label{cone}
Let $c=c(R)>0$, with $c\to 0$ as $R\to\infty$. Fix a point $P\in R\mathcal{S}^2$, and let $\alpha$ be a unit vector. Then all points $P'\in R\mathcal{S}^2$ satisfying $|\langle P-P',\alpha\rangle|\leq c|P-P'|$ lie: either on the same spherical segment, of opening angle $8c+O(c^3)$ and direction $\alpha$; or on the same spherical cap, of radius $\ll cR$ and direction $\alpha$, on $R\mathcal{S}^2$.
\end{lemma}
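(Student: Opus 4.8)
The plan is to work in spherical coordinates adapted to the axis $\alpha$, reducing the chord condition to a one-dimensional statement about the colatitude (the angle to $\alpha$), and then to read off the segment/cap parameters directly from Definitions \ref{defcap} and \ref{defseg}. The geometric content is that $|\langle P-P',\alpha\rangle|\leq c|P-P'|$ forces the chord $P-P'$ to be nearly perpendicular to $\alpha$ (within angle $\asymp c$ of the equatorial plane), which in turn pins the height of $P'$ along $\alpha$ to a thin band.

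First I would put the centre $O$ at the origin and describe each point of $R\mathcal{S}^2$ by its colatitude $\phi\in[0,\pi]$ (the angle between $\overrightarrow{OP'}$ and $\alpha$) and an azimuth about $\alpha$, so that $\langle P',\alpha\rangle=R\cos\phi$ and the component $P'_\perp$ of $P'$ perpendicular to $\alpha$ has length $R\sin\phi$. Writing $\phi_0$ for the colatitude of $P$ and splitting $P-P'$ into its $\alpha$-component and its perpendicular part, the defining inequality squares and rearranges to
\begin{equation*}
R|\cos\phi_0-\cos\phi|\leq \frac{c}{\sqrt{1-c^2}}\,|P_\perp-P'_\perp|.
\end{equation*}
Bounding $|P_\perp-P'_\perp|\leq R(\sin\phi_0+\sin\phi)$ by the triangle inequality and setting $t:=c/\sqrt{1-c^2}=c+O(c^3)$, I obtain the purely angular condition $|\cos\phi_0-\cos\phi|\leq t(\sin\phi_0+\sin\phi)$, valid for every admissible $P'$ irrespective of azimuth.

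The heart of the argument is then a sum-to-product simplification. Using $\cos\phi_0-\cos\phi=2\sin\frac{\phi+\phi_0}{2}\sin\frac{\phi-\phi_0}{2}$ and $\sin\phi_0+\sin\phi=2\sin\frac{\phi+\phi_0}{2}\cos\frac{\phi-\phi_0}{2}$, the positive factor $\sin\frac{\phi+\phi_0}{2}$ cancels and the condition collapses to $\big|\tan\frac{\phi-\phi_0}{2}\big|\leq t$, that is,
\begin{equation*}
|\phi-\phi_0|\leq 2\arctan t=2\arcsin c=2c+O(c^3).
\end{equation*}
Thus every admissible $P'$ has colatitude within $2\arcsin c$ of $\phi_0$.

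Finally I would convert this colatitude bound into the two stated alternatives. If $P$ lies away from the poles, i.e. $2\arcsin c<\phi_0<\pi-2\arcsin c$, then all admissible $P'$ lie between the planes perpendicular to $\alpha$ at colatitudes $\phi_0\pm 2\arcsin c$; by Definition \ref{defseg} this is a spherical segment of direction $\alpha$ whose opening angle is twice the colatitude span, namely $2\cdot 4\arcsin c=8\arcsin c=8c+O(c^3)$. If instead $P$ is within colatitude $2\arcsin c$ of a pole, every admissible $P'$ has colatitude at most $4\arcsin c$ from that pole, hence lies in the cap of direction $\alpha$ centred there, of chord radius $2R\sin(2\arcsin c)\leq 4cR\ll cR$ (cf. Definition \ref{defcap}). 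I expect the only real friction to be bookkeeping rather than substance: one must match the colatitude span to the opening angle exactly as in Definition \ref{defseg} so the constant $8$ is correct, keep the remainders at $O(c^3)$ through the inverse-trig expansions, and respect the convention that a segment be contained in a hemisphere — near the equator the band $[\phi_0-2\arcsin c,\phi_0+2\arcsin c]$ may straddle colatitude $\pi/2$, in which case I would split it into its two halves, each a genuine segment of opening angle $\leq 8c+O(c^3)$. The trigonometric inequality itself is routine; the delicate points are the pole/equator transition and these definitional matchups.
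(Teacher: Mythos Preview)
The paper does not supply its own proof of this lemma; it is quoted verbatim from \cite[Lemma 7.1]{maff3d} and used as a black box in \S\ref{secirrpl}. There is therefore nothing in the present paper to compare against.

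That said, your argument is correct and self-contained. The key step---rewriting the chord condition in colatitudes, squaring to isolate $|P_\perp-P'_\perp|$, bounding it by $R(\sin\phi_0+\sin\phi)$, and collapsing via sum-to-product to $|\tan\tfrac{\phi-\phi_0}{2}|\leq c/\sqrt{1-c^2}$---is clean and gives exactly $|\phi-\phi_0|\leq 2\arcsin c$. Your identification of the opening angle as twice the colatitude span ($2\cdot 4\arcsin c=8c+O(c^3)$) matches Definition \ref{defseg}, and the cap radius bound $2R\sin(2\arcsin c)\leq 4cR$ near the poles is accurate. The only residual bookkeeping is the hemisphere convention you already flagged: when the band straddles $\phi=\pi/2$ you split it in two, and when $\phi_0$ is near the south pole the cap has direction $-\alpha$ rather than $\alpha$; neither affects how the lemma is applied downstream.
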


In \cite{maff3d} we found several upper bounds for the maximal number of lattice points belonging to a spherical segment $\Gamma$ of the sphere $R\mathcal{S}^2$,
\begin{equation}
\label{psi}
\psi=\psi(R,h,k,\theta):=\max_\Gamma\#\{\lambda\in\mathbb{Z}^3\cap \Gamma\},
\end{equation}
with $h,k,\theta$ as in Definition \ref{defseg}. Here we collect some of these bounds for convenience. Recall that $\kappa$ denotes the maximal number of spherical lattice points in a plane, and the types \eqref{qq}, \eqref{qr}, \eqref{rr} of vectors/planes defined in section \ref{secresults}.
\begin{prop}
Let $\Gamma\subset R\mathcal{S}^2$ be a spherical segment of opening angle $\theta$, height $h$, radius of larger base $k$, and direction $\alpha$. Then the number of lattice points lying on $\Gamma$ satisfies for every $\epsilon>0$:
\begin{enumerate}[label=(\arabic*)]
\item
if $\alpha$ is of type \eqref{qq},
\begin{equation}
\label{psi1}
\psi\ll_{\alpha} R^\epsilon\cdot(1+h);
\end{equation}
\item
if $\alpha$ is of type \eqref{qr} or \eqref{rr},
\begin{equation}
\label{psi2}
\psi\ll_{\alpha} R^{1/2+\epsilon}\cdot(R^{1/4}+h);
\end{equation}
\item
if $\alpha$ is of type \eqref{qr},
\begin{equation}
\label{psi3}
\psi\ll_{\alpha} \kappa(R)(1+R\cdot\theta^{1/2});
\end{equation}
\item
if $\alpha$ is of type \eqref{rr},
\begin{equation}
\label{psi4}
\psi\ll_{\alpha} \kappa(R)(1+R\cdot\theta^{1/3}).
\end{equation}
\end{enumerate}
\end{prop}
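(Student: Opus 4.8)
The plan is to derive all four estimates by slicing the segment $\Gamma$ into the family of affine planes perpendicular to its direction $\alpha$. If $\lambda\in\mathbb{Z}^3\cap R\mathcal{S}^2$ lies in $\Gamma$, then it lies on the plane $\{x:\langle x,\alpha\rangle=\langle\lambda,\alpha\rangle\}$, whose intersection with $R\mathcal{S}^2$ is a circle carrying at most $\kappa(R)$ lattice points by Jarnik's bound \eqref{kappa3bound}. Writing $K$ for the number of \emph{distinct} values of the height $\langle\lambda,\alpha\rangle$ attained by the lattice points $\lambda\in\mathbb{Z}^3\cap R\mathcal{S}^2\cap\Gamma$, we obtain $\psi\le\kappa(R)\cdot K$. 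The estimates \eqref{psi1}, \eqref{psi3} and \eqref{psi4} all follow from this once $K$ is controlled according to the arithmetic type of $\alpha$; the arithmetic-free bound \eqref{psi2} needs a separate, purely geometric covering argument, which I postpone to the end.

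For the rational type \eqref{qq} I would scale $\alpha=(n_1,n_2,n_3)/L$ with $n_i\in\mathbb{Z}$ and $L=\|(n_1,n_2,n_3)\|$. Each height then equals $\langle\lambda,(n_1,n_2,n_3)\rangle/L$ with integer numerator, so the attainable heights lie in the progression $(1/L)\mathbb{Z}$, spaced at least $1/L$ apart. Since all heights in $\Gamma$ lie in an interval of length $h$ (Definition \ref{defseg}, cf. Lemma \ref{twoparpla}), this gives $K\ll_\alpha 1+h$, and combined with $\kappa(R)\ll R^\epsilon$ this is precisely \eqref{psi1}.

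For the irrational types the heights are no longer quantised, and this is the heart of the matter. I would approximate $\alpha$ by a nearby rational direction $\beta=b/\|b\|$, $b\in\mathbb{Z}^3$ with $\|b\|\asymp Q$, obtained from simultaneous Dirichlet approximation of the $d$ irrational ratios among $n_2/n_1,n_3/n_1$ (so $d=1$ for type \eqref{qr} and $d=2$ for type \eqref{rr}); this achieves an angular error $\angle(\alpha,\beta)\ll Q^{-(d+1)/d}$. The proxy heights $\langle\lambda,\beta\rangle\in(1/\|b\|)\mathbb{Z}$ are quantised exactly as in the rational case, with spacing $\asymp 1/Q$, while passing from $\beta$ back to $\alpha$ perturbs each height by at most $\ll R\,\angle(\alpha,\beta)$. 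Counting proxy planes across the height interval and accounting for the perturbation gives the shape $K\ll 1+Qh+R\,Q^{-1/d}$; optimising $Q\asymp(R/h)^{d/(d+1)}$ balances the two variable terms to yield $K\ll 1+R^{d/(d+1)}h^{1/(d+1)}$. Finally I would convert to the opening angle using the elementary segment geometry $h\ll R\theta$ (cf. \eqref{kth}), producing $K\ll 1+R\theta^{1/(d+1)}$, i.e. the exponents $1/2$ for \eqref{psi3} and $1/3$ for \eqref{psi4}. Multiplying by $\kappa(R)$ completes these two parts, and one sees transparently why the bound weakens as the number $d$ of irrationals grows.

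The step I expect to be the main obstacle is exactly the passage to the claimed $K$ in the irrational case: a priori several genuinely distinct $\alpha$-heights can collapse onto a single $\beta$-plane within the perturbation window of width $R\,\angle(\alpha,\beta)$, and controlling this multiplicity — uniformly in the position of $\Gamma$ on the sphere — is the delicate point, since a naive bound reintroduces a factor growing with the extent of $\Gamma$. Resolving it is what forces the use of the cone reformulation Lemma \ref{cone}, which identifies membership in a segment of opening angle $\theta$ with near-perpendicularity (to within angle $\asymp\theta$) of the difference vectors $\lambda-\lambda'$ to $\alpha$, thereby limiting which lattice differences can occur. For the arithmetic-free bound \eqref{psi2} I abandon the slicing and cover $\Gamma$, whose lateral area is $\asymp Rh$, by $\ll 1+Rh/s^2$ spherical caps of radius $s$; Lemma \ref{lemmachi} then gives
\[
\psi\ll R^{\epsilon}\Big(1+\frac{s^{2}}{R^{1/2}}+\frac{Rh}{s^{2}}+R^{1/2}h\Big),
\]
and the choice $s^{2}\asymp R^{3/4}h^{1/2}$ balances the middle terms to yield $\psi\ll R^{1/2+\epsilon}(R^{1/4}+h)$, which is \eqref{psi2}; since \eqref{psi1} is always at least as strong for rational $\alpha$, this case is recorded only for types \eqref{qr} and \eqref{rr}.
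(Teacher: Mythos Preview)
Your arguments for \eqref{psi1}, \eqref{psi3} and \eqref{psi4} are along the same lines as the cited proofs in \cite{maff3d}: slice $\Gamma$ by planes orthogonal to a rational direction and bound each circle by $\kappa(R)$, in the irrational cases first replacing $\alpha$ by a Dirichlet approximant $\beta$. Your self-identified ``main obstacle'' is a phantom. Once you slice by $\beta$-planes, the inequality $\psi\le\kappa(R)\cdot K_\beta$ holds with $K_\beta$ the number of distinct $\beta$-heights attained; if several distinct $\alpha$-heights collapse onto one $\beta$-plane this only \emph{decreases} $K_\beta$, so there is no multiplicity issue to resolve and no need to invoke Lemma~\ref{cone} here.

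There is, however, a genuine gap in your argument for \eqref{psi2}. The cap count ``$\ll 1+Rh/s^{2}$'' is an area heuristic that ignores the aspect ratio of $\Gamma$: when $s$ exceeds the angular width $\asymp R\theta$ of the segment, one still needs $\asymp k/s$ caps along its circumference, and the honest count is $\ll(1+k/s)(1+R\theta/s)$. With your choice $s^{2}\asymp R^{3/4}h^{1/2}$ this produces an extra contribution of order $ks/R^{1/2}\le R^{1/2}s\asymp R^{7/8}h^{1/4}$, which for $h\asymp 1$ already exceeds the target $R^{3/4}$ (concretely, take $k\asymp R$, $\theta\asymp R^{-3/4}$, $h\asymp R^{1/4}$). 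The paper's derivation instead applies \cite[Proposition~5.4]{maff3d} with the \emph{fixed} cap radius $\Omega=R^{1/4}$, so that $\chi(R,R^{1/4})\ll R^{\epsilon}$ by Lemma~\ref{lemmachi}, and then bounds the rectangular count
\[
\big(1+k/R^{1/4}\big)\big(1+R^{3/4}\theta\big)=1+kR^{-1/4}+R^{3/4}\theta+R^{1/2}k\theta
\]
termwise using $k\le R$, $\theta\le\pi$ and $k\theta\ll h$ from \eqref{kth}, yielding $R^{\epsilon}(R^{3/4}+R^{1/2}h)$, which is \eqref{psi2}. Replacing your area count by this rectangular one and taking $s=R^{1/4}$ repairs the argument.
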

\begin{proof}
The bound \eqref{psi1} was proven in \cite[Proposition 6.3]{maff3d} (also see Yesha \cite[Lemma A.1]{yesh13}). We now show that \eqref{psi2} follows directly from \cite{maff3d}. Applying \cite[Proposition 5.4]{maff3d} with $\Omega=R^{1/4}$,
\begin{equation*}
\psi\ll
\chi(R,R^{1/4})
\cdot
\left\lceil\frac{k}{R^{1/4}}\right\rceil
\cdot
\left\lceil R^{3/4}\theta\right\rceil
\end{equation*}
so that, by Lemma \ref{lemmachi},
\begin{equation*}
\psi\ll
R^{\epsilon}
\cdot
\left(
1+\frac{k}{R^{1/4}}+R^{3/4}\theta+R^{1/2}k\theta
\right).
\end{equation*}
Since $0\leq k\leq R$, $0\leq \theta\leq \pi$ and $k\theta\ll h$ \eqref{kth}, we obtain \eqref{psi2}. The bounds \eqref{psi3} and \eqref{psi4} were shown in \cite[Proposition 8.3]{maff3d} and \cite[Proposition 6.2]{maff3d} respectively.
\end{proof}

\section{Proofs of Theorems \ref{thmpl} and \ref{thmc}}
\label{secpl}
\subsection{The bounds for the variance}
In this section, we prove Theorem \ref{thmpl}. We commence by further reducing our problem of bounding the variance to estimating a summation over the lattice points on the sphere. Recall the notations $\Lambda$ of the frequency set \eqref{Lambda}, $A,B\in\mathbb{R}^+$ \eqref{AB}, and vectors/matrices $D,H,\Omega$ (Definition \ref{thedef}).
\begin{lemma}
\label{2mompl}
Let $\Pi$ be a $2$-dimensional toral sub-manifold confined to a plane. Then
\begin{equation}
\label{rint}
\iint_{\Pi^2}
\left(
r^2
+\frac{D\Omega D^T}{m}
+\frac{tr(H\Omega H\Omega)}{m^2}
\right)dpdp'
\ll_\Pi\frac{1}{N}+\frac{\mathcal{G}}{N^2},
\end{equation}
where
\begin{equation}
\label{g}
\mathcal{G}=\mathcal{G}_{m,\Pi}:=\sum_{\substack{\lambda,\lambda'\in\Lambda_m\\\lambda\neq\lambda'}}
\left|\int_{0}^{A}\int_{0}^{B}e^{2\pi i\langle\lambda-\lambda',u\xi+v\eta\rangle}dudv\right|^2.
\end{equation}
\end{lemma}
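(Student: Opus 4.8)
The plan is to integrate out the three terms on the left-hand side of \eqref{rint} one at a time, exploiting the stationary form \eqref{rplane} of the covariance function and the explicit descriptions of $D$ and $H$ in Definition \ref{thedef}. Each of $r$, $D$, and $H$ is a finite exponential sum over $\Lambda$ in the variable $(u'-u)\xi+(v'-v)\eta$, so squaring and integrating over $\Pi^2=([0,A]\times[0,B])^2$ will produce a double sum over pairs $\lambda,\lambda'\in\Lambda$ of the quantity $\int\int e^{2\pi i\langle\lambda-\lambda',u\xi+v\eta\rangle}\,du\,dv$ (after the standard change of variables reducing the four-fold integral over $(u,v,u',v')$ to a product of a trivial integration and the difference integral). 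I would first write $\iint_{\Pi^2}r^2 = N^{-2}\sum_{\lambda,\lambda'}|\int_0^A\int_0^B e^{2\pi i\langle\lambda-\lambda',u\xi+v\eta\rangle}\,du\,dv|^2$; separating the diagonal $\lambda=\lambda'$ (which contributes $N$ terms each of size $(AB)^2$, giving the $1/N$ on the right) from the off-diagonal (which is exactly $\mathcal{G}/N^2$) yields the claimed form for the first term.

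Next I would handle the quadratic forms $D\Omega D^T/m$ and $\mathrm{tr}(H\Omega H\Omega)/m^2$. The key observation is that $\Omega$ is the orthogonal projection onto the plane of $\Pi$ (one checks $\Omega\overrightarrow{n}=0$ and $\Omega$ acts as the identity on $\mathrm{span}\{\xi,\eta\}$), so for any frequency $\lambda$ the relevant contractions $\lambda\Omega\lambda^T$ and the analogous Hessian expressions depend only on the projection of $\lambda$ onto the plane, whose norm is $\ll\sqrt{m}$ since $\lambda\in\sqrt{m}\,\mathcal{S}^2$. Consequently each $\lambda$-factor in $D$ contributes a bounded multiple of $\sqrt m$ and each $\lambda^T\lambda$-factor in $H$ a bounded multiple of $m$; the prefactors $(2\pi i)^2/m$ and $(4\pi^2)^2/m^2$ in front of $D\Omega D^T$ and $\mathrm{tr}(H\Omega H\Omega)$ are precisely calibrated to cancel these powers of $m$. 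After this cancellation, integrating each quadratic form over $\Pi^2$ again reduces to the same double sum over $\lambda,\lambda'$, weighted by bounded coefficients, so these terms are likewise $\ll_\Pi 1/N + \mathcal{G}/N^2$.

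The main technical obstacle will be the bookkeeping for the Hessian term $\mathrm{tr}(H\Omega H\Omega)/m^2$: here one has a product of two Hessian sums, giving a quadruple object, and one must verify that after taking the trace and contracting with $\Omega$ the whole expression collapses to the same difference-integral structure with $O_\Pi(1)$ coefficients rather than producing some genuinely new arithmetic sum. I expect this to follow by writing $\mathrm{tr}(H\Omega H\Omega)=N^{-2}\sum_{\lambda,\lambda'}e^{2\pi i\langle\lambda-\lambda',\ldots\rangle}\,\mathrm{tr}(\lambda^T\lambda\,\Omega\,{\lambda'}^T\lambda'\,\Omega)$ and bounding the arithmetic factor $\mathrm{tr}(\lambda^T\lambda\,\Omega\,{\lambda'}^T\lambda'\,\Omega)=\langle\Omega\lambda,\lambda'\rangle^2$ (using $\lambda\Omega\lambda'^T=\langle\Omega\lambda,\lambda'\rangle$ and $\Omega^2=\Omega$) by $\|\Omega\lambda\|^2\|\lambda'\|^2\ll m^2$, which absorbs into the $1/m^2$ prefactor. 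Once all three contributions are shown to share the common upper bound $1/N + \mathcal{G}/N^2$, summing them gives \eqref{rint}, the absolute constant being harmlessly absorbed into the $\ll_\Pi$ notation.
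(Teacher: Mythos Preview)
Your proposal is correct and follows essentially the same route as the paper: expand each term as a double sum over $\lambda,\lambda'\in\Lambda$, integrate over $\Pi^2$ so that the oscillatory part becomes $\bigl|\int_0^A\int_0^B e^{2\pi i\langle\lambda-\lambda',u\xi+v\eta\rangle}\,du\,dv\bigr|^2$, and then separate the diagonal ($\ll 1/N$) from the off-diagonal ($\mathcal G/N^2$) after bounding the arithmetic coefficients by $O(m)$ and $O(m^2)$ respectively. The only organisational difference is that the paper reduces $D\Omega D^T$ and $\mathrm{tr}(H\Omega H\Omega)$ ``by the symmetries'' to the parametrised derivatives $r_u^2/m$ and $r_{uu'}^2/m^2$ and bounds those, whereas you compute the matrix contractions directly using your (correct) observation that $\Omega=I-\overrightarrow{n}\,\overrightarrow{n}^{\,T}$ is the orthogonal projection onto the plane; these amount to the same calculation in different coordinates.
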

The proof of Lemma \ref{2mompl} is relegated to appendix \ref{appa}. Assuming it, we deduce the following bound for the nodal intersection length variance.
\begin{cor}
Let $\Pi$ be a $2$-dimensional toral sub-manifold confined to a plane. Then
\begin{equation}
\label{varpl2}
\text{Var}(\mathcal{L})\ll_\Pi\frac{m}{N}+\frac{m}{N^2}\cdot\mathcal{G}.
\end{equation}
\end{cor}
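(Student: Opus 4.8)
The plan is to chain the two preceding results, which between them already carry all the substantive content. First I would invoke the approximate Kac-Rice bound of Proposition~\ref{approxKRpl}, which reduces the variance to the second moment of the covariance function and its derivatives:
\begin{equation*}
\text{Var}(\mathcal{L})\ll m\iint_{\Pi^2}\left(r^2+\frac{D\Omega D^T}{m}+\frac{tr(H\Omega H\Omega)}{m^2}\right)dpdp'.
\end{equation*}
Next I would apply Lemma~\ref{2mompl}, which expresses this integral in terms of the arithmetic quantity $\mathcal{G}$ of \eqref{g}:
\begin{equation*}
\iint_{\Pi^2}\left(r^2+\frac{D\Omega D^T}{m}+\frac{tr(H\Omega H\Omega)}{m^2}\right)dpdp'\ll_\Pi\frac{1}{N}+\frac{\mathcal{G}}{N^2}.
\end{equation*}

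Multiplying the latter bound by $m$ and substituting into the former yields
\begin{equation*}
\text{Var}(\mathcal{L})\ll_\Pi\frac{m}{N}+\frac{m}{N^2}\cdot\mathcal{G},
\end{equation*}
which is precisely \eqref{varpl2}. Since both implied constants depend only on $\Pi$ (through the dimensions $A,B$ of \eqref{AB} and the orthonormal frame $\{\overrightarrow{n},\xi,\eta\}$ of \eqref{xieta}), their composition is again a constant depending only on $\Pi$, and the argument is complete.

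There is no genuine obstacle at this stage: the corollary is a formal consequence of its two inputs. All the difficulty is hidden inside them — the reduction of the variance to a second-moment integral via the Kac-Rice machinery (Proposition~\ref{approxKRpl}), and the computation in Lemma~\ref{2mompl} that converts the three integrated terms $r^2$, $D\Omega D^T/m$, and $tr(H\Omega H\Omega)/m^2$ into the single lattice-point sum $\mathcal{G}$. The genuinely hard work is deferred to the subsequent estimation of $\mathcal{G}$ itself, where the arithmetic type \eqref{qq}, \eqref{qr}, \eqref{rr} of the plane enters and one must bound the number of lattice points in spherical segments using the results of section~\ref{capseg}; that is what ultimately produces the distinct exponents appearing in Theorems~\ref{thmpl} and \ref{thmc}.
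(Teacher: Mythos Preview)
Your proposal is correct and matches the paper's own proof exactly: the corollary is obtained by substituting the estimate \eqref{rint} of Lemma~\ref{2mompl} into the approximate Kac-Rice bound \eqref{varpl1} of Proposition~\ref{approxKRpl}. The additional commentary you provide about where the real difficulty lies is accurate and useful context, though of course not part of the formal argument.
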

\begin{proof}
One substitutes the estimate \eqref{rint} into the approximate Kac-Rice bound \eqref{varpl1}.
\end{proof}

In the following two lemmas we bound $\mathcal{G}$, thereby completing the proof of Theorem \ref{thmpl}. Recall that we distinguish between planes of three types, according to the unit normal $\overrightarrow{n}$ satisfying:
\begin{align}
\tag{i}
{n_2}/{n_1}\in\mathbb{Q} \quad&\text{and}\quad {n_3}/{n_1}\in\mathbb{Q};
\\
\tag{ii}
{n_2}/{n_1}\in\mathbb{Q} \quad&\text{and}\quad {n_3}/{n_1}\in\mathbb{R}\setminus\mathbb{Q};
\\
\tag{iii}
{n_2}/{n_1}\in\mathbb{R}\setminus\mathbb{Q} \quad&\text{and}\quad {n_3}/{n_1}\in\mathbb{R}\setminus\mathbb{Q}.
\end{align}
Recall further that $\kappa$ denotes the maximal number of spherical lattice points lying on a plane.
\begin{lemma}
\label{gratpl}
Let $\Pi$ be a $2$-dimensional toral sub-manifold confined to a {\em rational} plane. Then we have
\begin{equation}
\label{kbound}
\mathcal{G}
\\
\ll_\Pi N\cdot\kappa(\sqrt{m}).
\end{equation}
\end{lemma}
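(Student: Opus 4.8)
The plan is to exploit the factorisation of the integral in \eqref{g} together with the fact that, for a \emph{rational} plane, the orthogonal projection of $\mathbb{Z}^3$ onto the plane is a genuine lattice.

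First I would set $\mu:=\lambda-\lambda'\in\mathbb{Z}^3$ and $s:=\langle\mu,\xi\rangle$, $t:=\langle\mu,\eta\rangle$. Since $\langle\mu,u\xi+v\eta\rangle=us+vt$, the double integral factorises as a product of one–dimensional integrals, and
\[
\left|\int_0^A\int_0^B e^{2\pi i\langle\mu,u\xi+v\eta\rangle}\,du\,dv\right|^2
=\frac{\sin^2(\pi As)}{\pi^2 s^2}\cdot\frac{\sin^2(\pi Bt)}{\pi^2 t^2}
\ll_\Pi\frac{1}{(1+s^2)(1+t^2)},
\]
using $\min(A^2,s^{-2})\ll_A(1+s^2)^{-1}$ and likewise in $t$ (with the convention that the factor equals $A^2$, resp.\ $B^2$, when $s=0$, resp.\ $t=0$). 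Hence
\[
\mathcal{G}\ll_\Pi\sum_{\substack{\lambda,\lambda'\in\Lambda_m\\\lambda\neq\lambda'}}\frac{1}{(1+s^2)(1+t^2)}.
\]

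Next I would bring in the rationality. Write $\overrightarrow{n}=w/|w|$ with $w\in\mathbb{Z}^3$ primitive, and let $\operatorname{pr}$ denote orthogonal projection onto the plane $w^\perp$ expressed in the basis $\{\xi,\eta\}$, so that $\operatorname{pr}(\mu)=(s,t)$. Because $w$ is integral, $\ker\operatorname{pr}$ meets $\mathbb{Z}^3$ exactly in $\mathbb{Z}w$, and $\Lambda^*:=\operatorname{pr}(\mathbb{Z}^3)$ is a rank–$2$ lattice of covolume $1/|w|$, depending only on $\Pi$; in particular $(s,t)\in\Lambda^*$ for every pair. This discreteness of $\operatorname{pr}(\mathbb{Z}^3)$ is precisely where rationality is essential (for irrational planes $\operatorname{pr}(\mathbb{Z}^3)$ is dense and the argument collapses, which is why those cases need the separate, lossier treatment). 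The second input is that $\operatorname{pr}$ restricted to $\Lambda_m$ is at most $2$-to-$1$: if $\operatorname{pr}(\lambda)=\operatorname{pr}(\lambda')$ then $\lambda-\lambda'=cw$ with $c\in\mathbb{Z}$, and $|\lambda|^2=|\lambda'|^2=m$ forces $c\bigl(2\langle\lambda',w\rangle+c|w|^2\bigr)=0$, leaving at most one nonzero value of $c$.

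Now I would fix $\lambda'$ and sum over $\lambda$. Grouping the $\lambda$'s by the value $\operatorname{pr}(\lambda)\in\Lambda^*$ (each value attained at most twice) and using $\operatorname{pr}(\lambda')\in\Lambda^*$ to shift, one gets
\[
\sum_{\lambda\neq\lambda'}\frac{1}{(1+s^2)(1+t^2)}
\le 2\sum_{(q_1,q_2)\in\Lambda^*}\frac{1}{(1+q_1^2)(1+q_2^2)}\ll_\Pi 1,
\]
the last sum converging to a constant independent of $m$ by comparison with $\operatorname{covol}(\Lambda^*)^{-1}\iint_{\mathbb{R}^2}(1+s^2)^{-1}(1+t^2)^{-1}\,ds\,dt$. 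Summing over the $N$ choices of $\lambda'$ yields $\mathcal{G}\ll_\Pi N$, which is a fortiori the claimed bound \eqref{kbound} since $\kappa(\sqrt{m})\ge 1$. The main obstacle is the convergence of this projected lattice sum, and in particular overriding the misleading ``equatorial band'' intuition: although sphere lattice points with $\langle\lambda,\overrightarrow{n}\rangle\approx0$ are numerous, their in-plane projections are spread around the boundary circle of radius $\sqrt{m}$, so only $\ll_\Pi\rho^2$ of them fall in any fixed disc of radius $\rho$; the $2$-to-$1$ property together with the Gauss circle bound for the \emph{fixed} lattice $\Lambda^*$ makes this precise, which is exactly the product structure of the weight that guarantees convergence. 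If instead one prefers to mirror the uniform method used for the remaining types, one may replace the count of sphere points of bounded in-plane projection by the spherical-segment/plane estimates of section \ref{capseg} (e.g.\ \eqref{psi1} together with \eqref{kappa3bound}); this reintroduces the factor $\kappa(\sqrt{m})$ and recovers \eqref{kbound} directly.
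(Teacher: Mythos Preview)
Your argument is correct and in fact yields the stronger bound $\mathcal{G}\ll_\Pi N$, which is a genuine sharpening of the paper's $\mathcal{G}\ll_\Pi N\cdot\kappa(\sqrt m)$. The two proofs share the same core mechanism: rationality forces $(s,t)=(\langle\mu,\xi\rangle,\langle\mu,\eta\rangle)$ to lie in a \emph{fixed} discrete set, and for each prescribed projection there are at most two sphere points (paper: ``line $\cap$ sphere''; you: ``$\operatorname{pr}|_{\Lambda_m}$ is $2$-to-$1$''). The difference is in the treatment of the degenerate pairs with $s=0$ or $t=0$. The paper handles these separately, bounding the integral trivially by $O(1)$ and then counting such pairs by $N\kappa(\sqrt m)$ via the plane section $\langle\xi,\cdot\rangle=\mathrm{const}$. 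You instead keep the smooth weight $(1+s^2)^{-1}(1+t^2)^{-1}$ uniformly and absorb these pairs into the single lattice sum $\sum_{q\in\Lambda^\ast}(1+q_1^2)^{-1}(1+q_2^2)^{-1}$; the $2$-to-$1$ property together with the fact (here the rationality of $\xi,\eta$ gives $\Lambda^\ast\subset c\mathbb{Z}\times c'\mathbb{Z}$, so the sum is dominated by a product of two convergent one-dimensional sums) dispenses with the extra $\kappa$. This cleaner packaging removes the $m^\epsilon$ loss in Theorem~\ref{thmpl}\ref{thmpl1} and gives $\mathrm{Var}(\mathcal{L})\ll_\Pi m/N$ directly. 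One small point: the ``comparison with the integral'' justification for the convergence of the lattice sum is heuristic as written; the inclusion $\Lambda^\ast\subset c\mathbb{Z}\times c'\mathbb{Z}$, or the standard box-counting argument for a fixed rank-$2$ lattice, makes it rigorous with no extra effort.
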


Lemma \ref{gratpl} will be proven in section \ref{secratpl}. For irrational planes, we have the following.
\begin{lemma}
\label{girrpl}
For every $\epsilon>0$, one has
\begin{equation}
\label{abound}
\mathcal{G}
\\
\ll_\Pi N^{1+a+\epsilon}
\end{equation}
where we may take:
\begin{enumerate}[label=(\Alph*)]
\item
\label{37}
$a=3/7$ if $\overrightarrow{n}$ is of type \eqref{qr};
\item
\label{34}
$a=3/4$ if $\overrightarrow{n}$ is of type \eqref{rr};
\item
\label{12}
$a=1/2$ conditionally on Conjecture \ref{brgafaconj}.
\end{enumerate}
\end{lemma}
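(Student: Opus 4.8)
The double integral in \eqref{g} factorises: since $\{\overrightarrow{n},\xi,\eta\}$ is orthonormal, writing $\mu:=\lambda-\lambda'$ we have $\langle\mu,u\xi+v\eta\rangle=u\langle\mu,\xi\rangle+v\langle\mu,\eta\rangle$, so the integral is a product of two one-dimensional integrals. Each factor is bounded in modulus by $\min\big(A,(\pi|\langle\mu,\xi\rangle|)^{-1}\big)$, resp.\ $\min\big(B,(\pi|\langle\mu,\eta\rangle|)^{-1}\big)$. Writing $X:=|\langle\mu,\xi\rangle|$ and $Y:=|\langle\mu,\eta\rangle|$, and absorbing $A,B$ into the implied constant, this gives
\begin{equation*}
\mathcal{G}\ll_\Pi\sum_{\substack{\lambda,\lambda'\in\Lambda_m\\\lambda\neq\lambda'}}\min\!\big(1,X^{-2}\big)\,\min\!\big(1,Y^{-2}\big).
\end{equation*}
The summand is largest when $X,Y$ are both small, that is when the chord $\mu$ is nearly parallel to $\overrightarrow{n}$, since $X^2+Y^2$ equals the squared length of the projection of $\mu$ onto the plane.

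Fixing $\lambda$ and estimating the inner sum $S(\lambda)$ over $\lambda'$ uniformly (the factor $N$ arising from the $N$ choices of $\lambda$), a dyadic decomposition $X\asymp 2^{j}$, $Y\asymp 2^{l}$ together with layer-cake summation yields
\begin{equation*}
S(\lambda)\ll\sum_{j,l\ge 0}2^{-2j}2^{-2l}\,\mathcal{N}(2^{j},2^{l}),\qquad\mathcal{N}(c_1,c_2):=\#\{\lambda'\in\Lambda_m:X\le c_1,\ Y\le c_2\}.
\end{equation*}
By Lemma \ref{twoparpla} the condition $X\le c_1$ confines $\lambda'$ to a spherical segment of direction $\xi$ and height $\asymp c_1$, and $Y\le c_2$ to a segment of direction $\eta$ and height $\asymp c_2$; thus $\mathcal{N}$ counts lattice points in the intersection of two such segments, which I bound by the count in a single one. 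The crucial structural point is that the arithmetic type of $\{\xi,\eta\}$ is dictated by that of the plane: $\eta$ carries the same type as $\overrightarrow{n}$, while $\xi\propto(n_2,-n_1,0)$ is one step more rational. Hence the number of irrational ratios one must approximate equals that of the normal.

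I would then feed the relevant estimate from \eqref{psi1}--\eqref{psi4} into the dyadic sum, the weight $2^{-2j}2^{-2l}$ guaranteeing convergence with the dominant contribution at the smallest scales. For a plane of type \eqref{rr} the controlling direction $\eta$ is of type \eqref{rr}; here \eqref{psi2} (valid for both irrational types) gives $\#\{Y\le 2^l\}\ll R^{1/2+\epsilon}(R^{1/4}+2^{l})$ with $R=\sqrt{m}$, while the opening-angle estimate \eqref{psi4} does not improve on this. Bounding $\mathcal{N}(2^j,2^l)\le\#\{Y\le 2^l\}$ and summing, one finds $S(\lambda)\ll R^{3/4+\epsilon}$, whence $\mathcal{G}\ll N\cdot R^{3/4+\epsilon}=N^{1+3/4+\epsilon}$, the case $a=3/4$. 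For a plane of type \eqref{qr} the direction $\eta$ is only of type \eqref{qr}, so the sharper opening-angle estimate \eqref{psi3} applies; converting height into opening angle via \eqref{kth} and optimising the dyadic scales improves the exponent to $a=3/7$. For the conditional bound $a=1/2$ I would instead estimate the intersection region, when it is cap-like, by a spherical cap and invoke the improved cap bound of Conjecture \ref{brgafaconj} in place of Lemma \ref{lemmachi}.

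The main obstacle is the lattice-point count $\mathcal{N}(2^j,2^l)$, and specifically the passage between height and opening angle via \eqref{kth}, which is governed by the position of $\lambda$ on the sphere: a thin segment is cap-like when $\lambda$ lies near the pole of the relevant direction (so that $\theta\asymp\sqrt{h/R}$, cf.\ Lemma \ref{cone}) and elongated when $\lambda$ lies near the equator (so that $\theta\asymp h/R$). The two regimes call for different inputs---\eqref{psi3}/\eqref{psi4} in terms of $\theta$ versus \eqref{psi2} in terms of $h$---and the precise exponents emerge only after choosing the cut-off between them and balancing the dyadic scales. This is exactly where simultaneous Diophantine approximation of the irrational ratios of $\overrightarrow{n}$ enters, and where having a single irrational to approximate (type \eqref{qr}) yields a stronger bound than two (type \eqref{rr}).
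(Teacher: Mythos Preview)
Your dyadic framework is sound and, for part \ref{34}, complete: bounding $\mathcal{N}(2^j,2^l)\le\#\{Y\le 2^l\}$ via \eqref{psi2} and summing gives $S(\lambda)\ll R^{3/4+\epsilon}$ exactly as you claim. This is in fact simpler than the paper's route, which splits into three regimes governed by parameters $c,\rho$, constrains the $\eta$-direction through Lemma~\ref{cone} (producing a cap or a segment of controlled \emph{opening angle} rather than height), and controls the remaining tail with the Riesz-energy estimate of Proposition~\ref{asyriesz} before optimising $(c,\rho)$.

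Parts \ref{37} and \ref{12}, however, are not established. For \ref{37} you propose to bound $\#\{Y\le 2^l\}$ by \eqref{psi3} after converting height to opening angle via \eqref{kth}; but \eqref{kth} reads $\theta\ll h/k$, and the base radius $k$ depends on where $\lambda$ sits on the sphere, ranging over all of $[0,R]$. You correctly flag this as ``the main obstacle'' and then stop, so no exponent is actually derived. The paper sidesteps the issue by imposing $|\langle\lambda-\lambda',\eta\rangle|\le\rho|\lambda-\lambda'|$ instead of a height threshold: by Lemma~\ref{cone} this confines $\lambda'$ to a segment of opening angle $\ll\rho$ directly, so \eqref{psi3} applies uniformly in $\lambda$; the complementary regime $|\langle\cdot,\xi\rangle|\ge c$, $|\langle\cdot,\eta\rangle|\ge\rho|\lambda-\lambda'|$ is then handled via Proposition~\ref{asyriesz}, and the choice $(c,\rho)=(N^{3/7},N^{-8/7})$ yields $a=3/7$. (Incidentally, since for a type-\eqref{qr} plane the direction $\xi$ is of type \eqref{qq}, your own scheme with $\mathcal{N}(2^j,2^l)\le\#\{X\le 2^j\}$ and \eqref{psi1} already gives $S(\lambda)\ll R^\epsilon$, more than enough---you picked the harder direction.) For \ref{12}, ``cap-like intersection regions'' is not the mechanism: the conditional input is Corollary~\ref{covercapscor2}, the segment bound $\psi\ll R^\epsilon(R^{1/2}+h)$, and feeding \emph{that} into your dyadic sum in place of \eqref{psi2} immediately gives $S(\lambda)\ll R^{1/2+\epsilon}$ and hence $a=1/2$; as written, your sketch does not invoke it.
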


Lemma \ref{girrpl} will be proven in sections \ref{secirrpl} and \ref{secc}. Assuming them we may complete the proofs of our main theorems.
\begin{proof}[Proof of Theorems \ref{thmpl} and \ref{thmc} assuming Lemmas \ref{gratpl} and \ref{girrpl}]
One substitutes \eqref{kbound} into \eqref{varpl2} to obtain \eqref{varplr}. One substitutes \eqref{abound} into \eqref{varpl2} to obtain \eqref{varpli} and \eqref{varplc}.
\end{proof}

\subsection{Rational planes}
\label{secratpl}
In this subsection we prove Lemma \ref{gratpl}. We will need a preparatory result, the proof of which will follow in appendix \ref{appa}.
\begin{lemma}
\label{trineq}
Let $\xi,\eta\in\mathbb{R}^3$, satisfying
\begin{equation*}
\langle\lambda-\lambda',\xi\rangle\cdot\langle\lambda-\lambda',\eta\rangle\neq 0.
\end{equation*}
Then
\begin{equation}
\label{min}
\left|\int_{0}^{A}\int_{0}^{B}e^{2\pi i\langle\lambda-\lambda',u\xi+v\eta\rangle}dudv\right|^2
\ll\min\left(1,\frac{1}{\langle\lambda-\lambda',\xi\rangle^2\langle\lambda-\lambda',\eta\rangle^2}\right).
\end{equation}
\end{lemma}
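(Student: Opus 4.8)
The plan is to exploit the fact that the integrand in \eqref{min} \textbf{factorises}. Writing $w := \lambda - \lambda'$ and setting $a := \langle w,\xi\rangle$, $b := \langle w,\eta\rangle$, the bilinearity of the inner product gives $\langle \lambda-\lambda',\,u\xi+v\eta\rangle = u\langle w,\xi\rangle + v\langle w,\eta\rangle = ua+vb$, so the exponential separates and the double integral splits into a product of two independent one-dimensional integrals:
\[
\int_0^A\!\!\int_0^B e^{2\pi i\langle w,\,u\xi+v\eta\rangle}\,du\,dv
=\left(\int_0^A e^{2\pi i u a}\,du\right)\left(\int_0^B e^{2\pi i v b}\,dv\right).
\]
The hypothesis $ab\neq 0$ is used precisely here: it guarantees $a\neq 0$ and $b\neq 0$, so each factor is a genuine exponential integral rather than merely the length of the interval of integration.

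Next I would evaluate each factor explicitly. For $a\neq 0$,
\[
\int_0^A e^{2\pi i u a}\,du=\frac{e^{2\pi i A a}-1}{2\pi i a},
\]
whose modulus, via $|e^{i\theta}-1|=2|\sin(\theta/2)|$, equals $\dfrac{|\sin(\pi A a)|}{\pi|a|}$. From this single expression the two competing estimates are immediate: using $|\sin(\pi A a)|\le 1$ gives modulus $\le \frac{1}{\pi|a|}$, while using $|\sin(\pi A a)|\le \pi A|a|$ gives modulus $\le A$. The analogous bounds hold for the $v$-integral with $b,B$ in place of $a,A$.

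Finally I would combine the two factors and square, selecting in each factor whichever estimate is convenient. Taking the $\le A$ and $\le B$ bounds yields $|I|^2\le A^2B^2$, whereas taking the $\le \frac{1}{\pi|a|}$ and $\le \frac{1}{\pi|b|}$ bounds yields $|I|^2\le \frac{1}{\pi^4 a^2b^2}$; since $a=\langle\lambda-\lambda',\xi\rangle$ and $b=\langle\lambda-\lambda',\eta\rangle$, together these give exactly \eqref{min}.

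The argument is entirely elementary, so I do not anticipate a genuine obstacle; the only point meriting a word of care is the bound by $1$. Here one uses that $A,B$ are the fixed, finite dimensions of the compact submanifold $\Pi$ from \eqref{AB}, so $A^2B^2$ is a constant and is absorbed into the implied constant, making the estimate by $1$ legitimate. The role of the hypothesis $ab\neq0$ is likewise worth flagging, as it is exactly what keeps the second term of the minimum finite.
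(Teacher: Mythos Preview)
Your proof is correct and follows essentially the same route as the paper: factorise the double integral into the product of two one-dimensional exponential integrals, evaluate each explicitly as $\frac{e^{2\pi iAa}-1}{2\pi ia}$ (resp.\ with $B,b$), and bound the modulus by the two competing estimates. The paper is slightly terser---it obtains the trivial bound $\ll 1$ by a direct triangle-inequality remark and the second bound via $|e^{i\theta}-1|\le 2$ rather than passing through the sine formula---but the substance is identical.
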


\begin{proof}[Proof of Lemma \ref{gratpl} assuming Lemma \ref{trineq}]
We split the summation
\begin{equation*}
\mathcal{G}
=\sum_{\lambda\neq\lambda'}\left|\int_{0}^{A}\int_{0}^{B}e^{2\pi iu\langle\lambda-\lambda',\xi\rangle}du\cdot e^{2\pi iv\langle\lambda-\lambda',\eta\rangle}dv\right|^2
\end{equation*}
over the set of pairs $(\lambda,\lambda')$ s.t. $\langle\lambda-\lambda',\xi\rangle\cdot\langle\lambda-\lambda',\eta\rangle\neq 0$ and its complement. Thanks to the bounds \eqref{min} of Lemma \ref{trineq},
\begin{align}
\label{ratsum}
\notag
\mathcal{G}\ll_{\Pi}
&\#\{(\lambda,\lambda'): |\langle\lambda-\lambda',\xi\rangle|=0 \ \vee \  |\langle\lambda-\lambda',\eta\rangle|=0\}
\\&+\sum_{\langle\lambda-\lambda',\xi\rangle\cdot\langle\lambda-\lambda',\eta\rangle\neq 0}\frac{1}{\langle\lambda-\lambda',\xi\rangle^2\langle\lambda-\lambda',\eta\rangle^2}.
\end{align}

We claim that there are few pairs $(\lambda,\lambda')$ satisfying $\langle\lambda-\lambda',\xi\rangle=0$. Indeed, once we fix $\lambda$, the lattice point $\lambda'$ is confined to the plane
\begin{equation}
\label{plane}
\langle\xi,(x,y,z)\rangle=l,
\end{equation}
where $l:=\langle\lambda,\xi\rangle\in\mathbb{R}$. By definition of $\kappa$, there are at most $\kappa(\sqrt{m})$ solutions $(x,y,z)\in\Lambda$ to \eqref{plane}. Therefore,
\begin{equation}
\label{ratsum1}
\#\{(\lambda,\lambda'): |\langle\lambda-\lambda',\xi\rangle|=0\}
=\sum_{\lambda\in\Lambda}
\#\{\lambda': \ \langle\lambda',\xi\rangle=\langle\lambda,\xi\rangle\}
\leq N\cdot\kappa(\sqrt{m}).
\end{equation}
Similarly, there are few pairs $(\lambda,\lambda')$ such that $\langle\lambda-\lambda',\eta\rangle=0$. 

We turn to bounding the summation in \eqref{ratsum}. By assumption, $\overrightarrow{n}$ is of type \eqref{qq}. Taking $\xi,\eta$ as in \eqref{xieta}, then $\xi,\eta$ are also of type \eqref{qq}, hence we may write $\xi=c\tilde{\xi}$ and $\eta=c'\tilde{\eta}$, where $\tilde{\xi},\tilde{\eta}\in\mathbb{Z}^3$ and $c,c'$ are real numbers. Therefore,
\begin{multline*}
\sum_{\langle\lambda-\lambda',\xi\rangle\cdot\langle\lambda-\lambda',\eta\rangle\neq 0}\frac{1}{\langle\lambda-\lambda',\xi\rangle^2\langle\lambda-\lambda',\eta\rangle^2}
\\\ll_\Pi\sum_{\lambda}\sum_{a\neq 0}\sum_{b\neq 0}\frac{1}{a^2}\frac{1}{b^2}
\cdot\#\{\lambda':\langle \tilde{\xi},\lambda'\rangle=a\in\mathbb{Z} \ \wedge \ \langle \tilde{\eta},\lambda'\rangle=b\in\mathbb{Z}\}.
\end{multline*}

For fixed $a,b$, the lattice point $\lambda'$ is confined to the intersection of the two planes
\begin{equation*}
\langle\tilde{\xi},\lambda'\rangle=a \qquad \text{ and } \qquad \langle\tilde{\eta},\lambda'\rangle=b.
\end{equation*}
Since $\tilde{\xi}\perp\tilde{\eta}$, these two planes intersect in a line, hence the number of solutions $\lambda'\in\Lambda$ cannot exceed two. It follows that
\begin{gather}
\label{ratsum2}
\sum_{\langle\lambda-\lambda',\xi\rangle\cdot\langle\lambda-\lambda',\eta\rangle\neq 0}\frac{1}{\langle\lambda-\lambda',\xi\rangle^2\langle\lambda-\lambda',\eta\rangle^2}
\ll N.
\end{gather}
Substituting \eqref{ratsum1} and \eqref{ratsum2} into \eqref{ratsum} yields \eqref{kbound}.
\end{proof}

\subsection{Irrational planes}
\label{secirrpl}
In the present subsection we prove Lemma \ref{girrpl} parts \ref{37} and \ref{34}, using the bounds for lattice points in spherical caps and segments of section \ref{capseg}. We introduce the parameters $c=c(N),\rho=\rho(N)>0$ and consider the three regimes
\begin{itemize}
\item
first regime: $|\langle\lambda-\lambda',\xi\rangle|\leq c$;
\item
second regime: 
$|\langle\lambda-\lambda',\eta\rangle|\leq \rho|\lambda-\lambda'|$;
\item
third regime: $|\langle\lambda-\lambda',\xi\rangle|\geq c$, $|\langle\lambda-\lambda',\eta\rangle|\geq \rho|\lambda-\lambda'|$.
\end{itemize}
We apply the bounds \eqref{min} of Lemma \ref{trineq} to obtain
\begin{multline}
\label{irrsum}
\mathcal{G}\ll_{\Pi}
\#\{(\lambda,\lambda'): |\langle\lambda-\lambda',\xi\rangle|\leq c\}
+\#\{(\lambda,\lambda'): |\langle\lambda-\lambda',\eta\rangle|\leq \rho|\lambda-\lambda'|\}
\\+\sum_{\substack{
|\langle\lambda-\lambda',\xi\rangle|\geq c
\\
|\langle\lambda-\lambda',\eta\rangle|\geq \rho|\lambda-\lambda'|
}}\frac{1}{\langle\lambda-\lambda',\xi\rangle^2\langle\lambda-\lambda',\eta\rangle^2}.
\end{multline}

\begin{enumerate}[label=(\Alph*), listparindent=\the\parindent]
\item
\label{parta}
Let $\overrightarrow{n}$ be of type \eqref{qr}. Taking $\xi,\eta$ as in \eqref{xieta}, then $\xi$ is of type \eqref{qq} and $\eta$ of type \eqref{qr}.

\underline{First regime}. Once we fix $\lambda$, the lattice points $\lambda'$ satisfying
\begin{equation*}
|\langle\lambda-\lambda',\xi\rangle|\leq c
\end{equation*}
lie on a spherical segment $\Gamma_\lambda$ of height at most $2c$ and direction $\xi$ (see Lemma \ref{twoparpla}). As $\xi$ is of type \eqref{qq}, we may apply \eqref{psi1}:
\begin{equation}
\label{1reg}
\#\{(\lambda,\lambda'): |\langle\lambda-\lambda',\xi\rangle|\leq c\}
\ll N R^\epsilon\left(1+c\right).
\end{equation}

\underline{Second regime}. Once we fix $\lambda$, the lattice points $\lambda'$ satisfying
\begin{equation*}
|\langle\lambda-\lambda',\eta\rangle|\leq \rho|\lambda-\lambda'|
\end{equation*}
lie on a spherical segment $\Gamma_{\lambda}$ of opening angle $8\rho+O(\rho^3)$ and direction $\eta$, or on a spherical cap $\mathcal{T}_\lambda$ of radius $\ll\rho R$ and direction $\eta$, on $R\mathcal{S}^2$ (see Lemma \ref{cone}). Later we are going to choose $\rho=N^{-8/7}$, thus the number of lattice points in $\mathcal{T}_\lambda$ of radius $\rho R=o(1)$ is $\ll R^{\epsilon}$. To control the lattice points in each $\Gamma_{\lambda}$, as $\eta$ is of type \eqref{qr}, we may apply \eqref{psi3}:
\begin{equation}
\label{2reg}
\#\{(\lambda,\lambda'): |\langle\lambda-\lambda',\eta\rangle|\leq \rho\cdot|\lambda-\lambda'|\}
\ll N R^\epsilon(1+R\rho^{1/2}).
\end{equation}

\underline{Third regime}. Here we have
\begin{equation}
\label{3reg}
\sum\frac{1}{\langle\lambda-\lambda',\xi\rangle^2\langle\lambda-\lambda',\eta\rangle^2}
\leq\frac{1}{c^2\rho^2}\sum\frac{1}{|\lambda-\lambda'|^{2-\epsilon'}}\ll\frac{m^\epsilon}{c^2\rho^2}
\end{equation}
via an application of Proposition \ref{asyriesz}. Collecting the estimates \eqref{1reg}, \eqref{2reg}, \eqref{3reg}, and \eqref{irrsum} we obtain
\begin{equation*}
\mathcal{G}
\ll_\Pi N R^\epsilon\left(1+c\right)+N R^\epsilon(1+R\rho^{1/2})+\frac{m^\epsilon}{c^2\rho^2}.
\end{equation*}
The optimal choice of parameters $(c,\rho)=(N^{3/7},N^{-8/7})$ yields \eqref{abound} with $a=3/7$.

\item
In case $\overrightarrow{n}$ is of type \eqref{rr}, then $\xi$ is of type \eqref{qr} and $\eta$ of type \eqref{rr}. After a relabelling \footnote{Alternatively, one could swap the roles of $\xi,\eta$ when defining the three regimes.}, $\xi$ is of type \eqref{rr} and $\eta$ of type \eqref{qr}. We modify the proof of part \ref{parta} in the following way. In the first regime, by Lemma \ref{twoparpla} and \eqref{psi2},
\begin{equation*}
\#\{(\lambda,\lambda'): |\langle\lambda-\lambda',\xi\rangle|\leq c\}
\ll N R^{1/2+\epsilon}\cdot(R^{1/4}+c).
\end{equation*}

In the second regime, the lattice points in the cap $\mathcal{T}_\lambda$ of radius $\ll\rho R$ have the upper bound $R^\epsilon(1+\rho^2 R^{3/2})$ (Lemma \ref{lemmachi}), while those in each segment $\Gamma_\lambda$ are no more than $R^\epsilon(1+R\rho^{1/2})$ \eqref{psi3}. It follows that
\begin{multline*}
\#\{(\lambda,\lambda'): |\langle\lambda-\lambda',\eta\rangle|\leq \rho|\lambda-\lambda'|\}
\\\ll N R^\epsilon(1+\rho^2 R^{3/2})
+N R^\epsilon(1+R\rho^{1/2}).
\end{multline*}

Choosing e.g. $(c,\rho)=(N^{1/14},N^{-6/7})$, we have obtained the bound
\begin{equation*}
\mathcal{G}
\ll_\Pi N R^\epsilon(1+\rho^2 R^{3/2}+R\rho^{1/2})+N R^{1/2+\epsilon}(R^{1/4}+c)+\frac{1}{c^2\rho^2}\ll N^{7/4+\epsilon}
\end{equation*}
proving Lemma \ref{girrpl} part \ref{34}.
\end{enumerate}

\subsection{Conditional result}
\label{secc}
It remains to show Lemma \ref{girrpl} part \ref{12}. Assuming Conjecture \ref{brgafaconj}, one may improve the bound \eqref{psi2} for lattice points in spherical segments of given height and larger base radius.
\begin{cor}[{\cite[Corollary 5.6]{maff3d}}]
\label{covercapscor2}
Assume Conjecture \ref{brgafaconj}. Let $\Gamma\subset R\mathcal{S}^2$ be a spherical segment of height $h$ and radius of larger base $k$. Then for every $\epsilon>0$,
\begin{equation}
\label{psi5}
\psi\ll R^{\epsilon}
\cdot
(R^{1/2}+h).
\end{equation}
\end{cor}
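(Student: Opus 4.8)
The plan is to obtain \eqref{psi5} by feeding the conjectural cap bound into the cap-covering estimate for spherical segments already used to establish \eqref{psi2}, and then optimising the cap radius. Recall that \cite[Proposition 5.4]{maff3d} covers a spherical segment $\Gamma\subset R\mathcal{S}^2$ of opening angle $\theta$, height $h$ and larger base radius $k$ by spherical caps of radius $\Omega$, yielding for any admissible $\Omega$ the bound
\begin{equation*}
\psi\ll\chi(R,\Omega)\cdot\left\lceil\frac{k}{\Omega}\right\rceil\cdot\left\lceil\frac{R\theta}{\Omega}\right\rceil,
\end{equation*}
where $\chi(R,\Omega)$ is the maximal number of lattice points in a cap of radius $\Omega$. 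This is exactly the inequality displayed in the proof of \eqref{psi2}, written there for the special value $\Omega=R^{1/4}$ (for which $R^{3/4}\theta=R\theta/\Omega$). The unconditional argument invoked Lemma \ref{lemmachi}; here I would instead use Conjecture \ref{brgafaconj}, which gives $\chi(R,\Omega)\ll R^\epsilon(1+\Omega^2/R)$ whenever $\Omega<R^{1-\delta}$.

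First I would substitute the conjectural bound and expand the ceilings, so that
\begin{equation*}
\psi\ll R^\epsilon\left(1+\frac{\Omega^2}{R}\right)\left(1+\frac{k}{\Omega}\right)\left(1+\frac{R\theta}{\Omega}\right).
\end{equation*}
The key step is the choice $\Omega=R^{1/2}$, which is admissible in Conjecture \ref{brgafaconj} for any fixed $\delta\in(0,1/2)$. With this choice $1+\Omega^2/R\ll 1$, and the product collapses to
\begin{equation*}
\psi\ll R^\epsilon\left(1+\frac{k}{R^{1/2}}+R^{1/2}\theta+k\theta\right).
\end{equation*}
It then remains to bound the four terms using the elementary constraints recorded after Definition \ref{defseg}: one has $1\ll R^{1/2}$, and $k/R^{1/2}\ll R^{1/2}$ since $k\leq R$, and $R^{1/2}\theta\ll R^{1/2}$ since $\theta\leq\pi$, while the crucial term $k\theta$ is controlled by the relation \eqref{kth}, namely $k\theta\ll h$. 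Collecting these estimates gives precisely $\psi\ll R^\epsilon(R^{1/2}+h)$, as claimed; note that the larger base radius $k$ drops out of the final bound.

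Since the argument is essentially an optimisation, the only genuine inputs are the covering estimate \cite[Proposition 5.4]{maff3d} and the conjectural cap bound, and I do not expect a serious obstacle beyond verifying that $\Omega=R^{1/2}$ simultaneously balances the $\Omega^2/R$ growth of the cap bound against the covering factors $k/\Omega$ and $R\theta/\Omega$. The one point requiring care is the admissibility constraint $\Omega<R^{1-\delta}$ in Conjecture \ref{brgafaconj}: since we need only $\Omega=R^{1/2}$, it suffices to apply the conjecture with any fixed $\delta\in(0,1/2)$. Throughout we may assume, as after Definition \ref{defseg}, that $\Gamma$ lies in a hemisphere, so that $h,k,\theta$ are genuinely linked by \eqref{kth} and the relation $k\theta\ll h$ is available.
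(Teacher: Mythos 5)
Your proof is correct and is essentially the paper's own route: the paper imports this statement from \cite[Corollary 5.6]{maff3d}, and its in-text proof of the unconditional analogue \eqref{psi2} is exactly your covering argument via \cite[Proposition 5.4]{maff3d}, only with $\Omega=R^{1/4}$ and Lemma \ref{lemmachi} in place of your $\Omega=R^{1/2}$ and Conjecture \ref{brgafaconj}. Your choice $\Omega=R^{1/2}$ (admissible for any fixed $\delta<1/2$), the expansion of the ceilings, and the use of $k\leq R$, $\theta\leq\pi$, and $k\theta\ll h$ from \eqref{kth} under the hemisphere convention are precisely the intended optimisation.
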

We introduce the parameters $c=c(N),c'=c'(N)>0$ and consider the three regimes
\begin{itemize}
\item
first regime: $|\langle\lambda-\lambda',\xi\rangle|\leq c$;
\item
second regime:
$|\langle\lambda-\lambda',\eta\rangle|\leq c'$;
\item
third regime: $|\langle\lambda-\lambda',\xi\rangle|\geq c$, $|\langle\lambda-\lambda',\eta\rangle|\geq c'$.
\end{itemize}
We apply the bounds \eqref{min} of Lemma \ref{trineq} to obtain
\begin{multline}
\label{csum}
\mathcal{G}\ll_{\Pi}
\#\{(\lambda,\lambda'): |\langle\lambda-\lambda',\xi\rangle|\leq c\}
+\#\{(\lambda,\lambda'): |\langle\lambda-\lambda',\eta\rangle|\leq c'\}
\\+\sum_{\substack{
|\langle\lambda-\lambda',\xi\rangle|\geq c
\\
|\langle\lambda-\lambda',\eta\rangle|\geq c'
}}\frac{1}{\langle\lambda-\lambda',\xi\rangle^2\langle\lambda-\lambda',\eta\rangle^2}.
\end{multline}

\underline{First regime}. Once we fix $\lambda$, the lattice points $\lambda'$ satisfying
\begin{equation*}
|\langle\lambda-\lambda',\xi\rangle|\leq c
\end{equation*}
lie on a spherical segment $\Gamma_\lambda$ of height at most $2c$ and direction $\xi$ (see Lemma \ref{twoparpla}). By \eqref{psi5},
\begin{equation}
\label{1regc}
\#\{(\lambda,\lambda'): |\langle\lambda-\lambda',\xi\rangle|\leq c\}
\ll N R^\epsilon(R^{1/2}+c).
\end{equation}

\underline{Second regime}. Similarly to the first regime,
\begin{equation}
\label{2regc}
\#\{(\lambda,\lambda'): |\langle\lambda-\lambda',\eta\rangle|\leq c'\}
\ll N R^\epsilon(R^{1/2}+c').
\end{equation}

\underline{Third regime}. Here we simply write
\begin{equation}
\label{3regc}
\sum\frac{1}{\langle\lambda-\lambda',\xi\rangle^2\langle\lambda-\lambda',\eta\rangle^2}
\leq\frac{N^2}{c^2c'^2}.
\end{equation}
Collecting the estimates \eqref{1regc}, \eqref{2regc}, \eqref{3regc}, and \eqref{csum}, we obtain
\begin{equation*}
\mathcal{G}
\ll_\Pi N R^\epsilon(R^{1/2}+c+c')+\frac{N^2}{c^2c'^2}\ll N^{3/2+\epsilon},
\end{equation*}
choosing e.g. $c=c'=N^{1/5}$. This completes the proof of Lemma \ref{girrpl} part \ref{12}.



\appendix

\section{Proofs of auxiliary results}
\label{appa}
\noindent
In this appendix, we prove a couple of auxiliary lemmas.
\begin{proof}[Proof of Lemma \ref{2mompl}]
We follow \cite[\S 3 and \S 6]{maff2d} and \cite[\S 3]{maff3d}. Squaring $r$ we obtain
\begin{equation*}
r^2((u,v),(u',v'))
=
\frac{1}{N^2}\sum_{\lambda,\lambda'} e^{2\pi i\langle\lambda-\lambda',(u'-u)\xi+(v'-v)\eta\rangle}
\end{equation*}
and on integrating over $\Pi^2$, the contribution of the diagonal terms to \eqref{rint} is
\begin{equation}
\label{diag}
\frac{1}{N^2}\int_{0}^{A}\int_{0}^{B}\int_{0}^{A}\int_{0}^{B}\sum_{\lambda}1dudvdu'dv'
\ll\frac{1}{N}.
\end{equation}

The off-diagonal terms equal
\begin{align}
\label{od}
\notag
&\int_{([0,A]\times[0,B])^2}\frac{1}{N^2}\sum_{\lambda\neq\lambda'}e^{2\pi i\langle\lambda-\lambda',(u'-u)\xi+(v'-v)\eta\rangle}
dudvdu'dv'
\\&\notag=\frac{1}{N^2}\sum_{\lambda\neq\lambda'}
\int_{0}^{A}\int_{0}^{B}e^{2\pi i\langle\lambda-\lambda',u'\xi+v'\eta\rangle}dudv
\int_{0}^{A}\int_{0}^{B}e^{-2\pi i\langle\lambda-\lambda',u\xi+v\eta\rangle}du'dv'
\\&=\frac{1}{N^2}\sum_{\lambda\neq\lambda'}
\left|\int_{0}^{A}\int_{0}^{B}e^{2\pi i\langle\lambda-\lambda',u\xi+v\eta\rangle}dudv\right|^2=\frac{\mathcal{G}}{N^2}.
\end{align}
By \eqref{diag} and \eqref{od},
\begin{equation*}
\iint_{\Pi^2}
r^2dpdp'
\ll_\Pi\frac{1}{N}+\frac{\mathcal{G}}{N^2}.
\end{equation*}

To complete the proof of \eqref{rint}, by the symmetries it will suffice to show that
\begin{equation}
\label{suffice}
\iint_{\Pi^2}
\left(
\frac{r_u^2}{m}
+\frac{r_{uu'}^2}{m^2}
\right)dpdp'
\ll_\Pi\frac{1}{N}+\frac{\mathcal{G}}{N^2}
\end{equation}
(see Definition \ref{thedef}). One has
\begin{equation*}
r_u=\frac{2\pi i}{N}\sum_{\lambda\in\Lambda}\langle\lambda,\xi\rangle e^{2\pi i\langle\lambda,(u'-u)\xi+(v'-v)\eta\rangle},
\end{equation*}
hence, as required in \eqref{suffice},
\begin{align*}
&\iint_{\Pi^2}\frac{r_u^2}{m}dpdp'
\ll_\Pi\frac{1}{N}+\int_{([0,A]\times[0,B])^2}\frac{1}{N^2}\sum_{\lambda\neq\lambda'}\left\langle\frac{\lambda}{|\lambda|},\xi\right\rangle\left\langle\frac{\lambda'}{|\lambda'|},\xi\right\rangle
\\
&\cdot e^{2\pi i\langle\lambda-\lambda',(u'-u)\xi+(v'-v)\eta\rangle}
dudvdu'dv'
\\&\leq\frac{1}{N}+\int_{([0,A]\times[0,B])^2}\frac{1}{N^2}\sum_{\lambda\neq\lambda'}e^{2\pi i\langle\lambda-\lambda',(u'-u)\xi+(v'-v)\eta\rangle}
dudvdu'dv'
\\&=\frac{1}{N}+\frac{\mathcal{G}}{N^2}
\end{align*}
where in the first inequality we isolated the diagonal terms and in the second we applied Cauchy-Schwartz. The calculation for the second derivatives is very similar and we omit it here.
\end{proof}
\begin{proof}[Proof of Lemma \ref{trineq}]
The first upper bound in \eqref{min} is a straightforward application of the triangle inequality. To show the second bound in \eqref{min}, we integrate and apply the triangle inequality,
\begin{equation*}
\left|\int_{0}^{A} e^{2\pi iu\langle\lambda-\lambda',\xi\rangle}du\right|^2
=\frac{|e^{2\pi i A\langle\lambda-\lambda',\xi\rangle}-1|}{4\pi^2\langle\lambda-\lambda',\xi\rangle^2}
\leq\frac{1}{\pi^2}\cdot\frac{1}{\langle\lambda-\lambda',\xi\rangle^2}
\end{equation*}
and similarly for the integral over $[0,B]$. This completes the proof of Lemma \ref{trineq}.
\end{proof}

\addcontentsline{toc}{section}{References}
\bibliographystyle{plain}
\bibliography{bibfile}

\Addresses

\end{document}